\documentclass[a4paper,reqno]{amsart}

\usepackage{comment}

\usepackage[utf8]{inputenc}
\usepackage[english]{babel}
\usepackage[T1]{fontenc}
\usepackage{microtype} 

\usepackage{amsmath,amssymb,amsthm}

\usepackage{thmtools} 
\usepackage{mathtools} 
\usepackage{upgreek}
\usepackage{orcidlink}

\usepackage{hyperref}
\hypersetup{%
  colorlinks=true,
  linkcolor=blue,
  citecolor=green!60!black
}
\usepackage{cleveref} 

\usepackage{ifthen}

\usepackage{tikz} 
\usetikzlibrary{cd}
\usetikzlibrary{calc}

\usepackage{pifont} 

\usepackage{accents} 

\usepackage{enumitem}
\newlength{\mylistindent}
\setlist[itemize,1]{leftmargin=\mylistindent,itemsep=2pt,parsep=2pt,topsep=3pt}
\setlist[enumerate,1]{label=\textup{(\roman{*})},leftmargin=\mylistindent,topsep=3pt,itemsep=2pt,parsep=2pt}

\newlist{steps}{enumerate}{1}
\setlist[steps]{label=\textup{\textbf{Step~\arabic{*}.}}, ref=\textup{Step~\arabic{*}}, align=left, leftmargin=0pt, itemindent=*,labelindent=0pt, labelsep=3pt, itemsep=3pt, parsep=2pt,topsep=2pt}

\allowdisplaybreaks

\numberwithin{equation}{section}

\let\geq\geqslant
\let\leq\leqslant

\newcommand*{\cM}{\ensuremath{\mathcal{M}}}

\newcommand*{\N}{\mathbb{N}} 
\newcommand*{\R}{\mathbb{R}} 
\newcommand*{\C}{\mathbb{C}} 

\renewcommand*{\epsilon}{\varepsilon} 
\newcommand*{\dd}{\mathrm{d}}

\newcommand*{\dx}[1][x]{\mathop{\dd{}#1}}

\newcommand{\cond}[1][\,]{#1:#1}

\newcommand{\conC}{\mathrm{C}}
\newcommand{\Cc}[1][\infty]{\conC^{#1}_{\mathrm{c}}}
\newcommand{\Lp}[2][]{\mathrm{L}_{#2\ifthenelse{\equal{#1}{}}{}{,#1}}} 
\newcommand{\Lb}{\mathcal{L}_{\mathrm{b}}} 
\newcommand{\soboW}{\mathrm{W}}
\newcommand{\soboH}{\mathrm{H}}

\newcommand{\sH}{\soboH}
\DeclareMathOperator{\cvl}{\ast}

\DeclareMathAccent{\Circ}{\mathalpha}{operators}{"17}

\DeclareMathOperator{\dom}{dom}

\DeclareMathOperator{\ran}{ran}

\DeclareMathOperator{\conv}{co}

\renewcommand{\div}{\operatorname{div}}

\DeclareMathOperator{\grad}{grad}

\DeclareMathOperator{\curlz}{\mathring{\operatorname{\curl}}}
\DeclareMathOperator{\divz}{\mathring{\operatorname{\div}}}
\DeclareMathOperator{\gradz}{\mathring{\operatorname{\grad}}}

\renewcommand{\Re}{\operatorname{Re}}

\DeclareMathOperator{\curl}{curl}

\newcommand{\ball}{\mathrm{B}}

\DeclarePairedDelimiter{\norm}{\lVert}{\rVert}
\DeclarePairedDelimiter{\abs}{\vert}{\vert}

\DeclarePairedDelimiterX{\dset}[2]{\{}{\}}{#1\mathrel{:}\mathopen{} #2} 
\DeclarePairedDelimiterX{\scprod}[2]{\langle}{\rangle}{#1,#2}

\newcommand{\cpt}{\overset{\mathsf{cpt}}{\hookrightarrow}}
\newcommand{\weakto}{\rightharpoonup}

\newcommand{\Htopo}{\mathrm{H}}

\theoremstyle{plain}

\newtheorem{theorem}{Theorem}[section]
\newtheorem{lemma}[theorem]{Lemma}
\newtheorem{proposition}[theorem]{Proposition}
\newtheorem{corollary}[theorem]{Corollary}

\theoremstyle{definition}

\newtheorem{remark}[theorem]{Remark}
\newtheorem{example}[theorem]{Example}

\makeatletter
\providecommand{\proofNameStyle}{\bfseries}
\renewenvironment{proof}[1][\proofname]{\par
  \pushQED{\qed}%
  \normalfont \topsep6\p@\@plus6\p@\relax
  \trivlist
  \item[\hskip\labelsep\proofNameStyle
  #1\@addpunct{.}]\ignorespaces
}{%
  \popQED\endtrivlist\@endpefalse
}
\makeatother

\begin{document}

\title{Nonlinear Media via Nonlocal Homogenisation}

\author[A.~Buchinger]{Andreas Buchinger\,\orcidlink{0009-0004-4203-5874}}
\address{TU Hamburg, Institute of Mathematics,
  Am Schwarzenberg-Campus 3, 21073 Hamburg, Germany}
\email{andreas.buchinger@tuhh.de}

\author[N.~Skrepek]{Nathanael Skrepek\,\orcidlink{0000-0002-3096-4818}}
\address{University of Twente, P.O.\ Box 217, Department of Applied Mathematics, 7500 AE Enschede, The Netherlands}
\email{n.skrepek@utwente.nl}

\author[M.~Waurick]{Marcus Waurick\,\orcidlink{0000-0003-4498-3574}}
\address{TU Bergakademie Freiberg, Institute for Applied Analysis, Pr{\"u}ferstra{\ss}e 9,  09599, Freiberg, Germany}
\email{marcus.waurick@math.tu-freiberg.de}

\makeatletter
\hypersetup{
  pdftitle={\@title},
  pdfauthor={A. Buchinger, N. Skrepek, M. Waurick},
}
\makeatother

\date{\today}

\subjclass{35Q61, 35F50, 78M40}
\keywords{Nonlinear Maxwell's equations, Homogenisation, Nonlocal $\Htopo$-convergence, Schur topology, Schauder theory, electrostatics}

\begin{abstract}
  We consider a nonlinear PDE describing a nonlinear electrostatic medium with nonlocal dielectricity. The existence proof for the corresponding equation is based on Schauder's theorem and a new compactness theorem for moving coefficients (``Helga's Theorem''). This technique uses insights from (operator-theoretic/topological) homogenisation theory. Surprisingly, even though monotonicity assumptions are neither used nor valid, the underlying domain is only required to be weak Lipschitz and no assumption on the derivatives of the nonlinearity is needed.
\end{abstract}

\maketitle

\section{Introduction}

In this short note, we present an example where topological and operator-theoretic insights in homogenisation can lead to existence results for nonlinear partial differential equations with only a minimum of regularity needed. Hence, even though possible with moderate effort, we refrained from generalising this to other equations or dimensions in order to focus on a rather elementary yet nontrivial example, where the advantages of the method are clearly visible.
 
Our example is the following static, nonlinear Maxwell problem. Let $\Omega\subseteq \R^3$ be open, bounded, weak Lipschitz so that its complement is connected. For given $f \in \Lp{2}(\Omega)$ and $g\in\Lp{2}(\Omega)^3$ consider finding suitably regular $E\in \Lp{2}(\Omega)^3$ such that
\begin{equation}\label{eq:main-problem-handwavy}
  \left\{
  \begin{alignedat}{3}
    -\div \varepsilon(E) &= f &\qquad& \text{in}\ \Omega,  \\
    \curl E &= g && \text{in}\ \Omega, \\
    \nu \times E &= 0 && \text{on}\ \partial\Omega,
  \end{alignedat}
  \right.
\end{equation}
where $\nu$ denotes the normal vector on the boundary. The homogeneous tangential boundary condition $\nu \times E = 0$ is used formally here.\footnote{For strong Lipschitz $\Omega$, the tangential trace $\nu \times E$ is well-defined, so that the condition $\nu \times E = 0$ is meaningful in the usual sense. For weak Lipschitz domains, the tangential trace is generally not available, and the boundary condition is understood in an appropriate weak sense.}
 The nonlinearity is given by
\[
  \varepsilon(E) \coloneq a E + F(E)\cvl E
  \quad\text{with}\quad
  F(E)(x)\coloneq F(E(x)),
\]
where $F\colon \C^3 \to \C$ is bounded and continuous, $\cvl$ denotes the convolution, and $a\in \Lb(\Lp{2}(\Omega)^3)$ is a bounded operator, e.g., induced by multiplication with a $\Lp{\infty}(\Omega)^{3\times 3}$-matrix satisfying usual positive definiteness requirements. Given a size condition of the lower bound of $a$, the bound of $F$, and the volume of $\Omega$, we prove the existence of $\Lp{2}(\Omega)^3$-solutions $E$ via Schauder's fixed point theorem. 

Problems similar to this form have been considered in the physics literature; see, e.g., \cite{Sh89} and \cite[Chapter 2]{Graffi1980}. Further, note that nonlinear material models, where, for instance, the electric polarisation is nonlinear in the $E$-field, are considered to model various effects in the behaviour of electromagnetic wave propagation; see, e.g., \cite{KZ12}. In the more mathematical literature, (electro-)static partial differential equations have been addressed; for instance, in \cite{Picard1990}. However, in that reference, the coefficients are assumed to be maximal monotone. Such a sign-condition is not assumed here. For time-dependent problems, we refer to \cite{DIW24,Sp19,SS22} in order to provide exemplary references to nonlinear Maxwell problems that require some (piecewise) differentiability) smoothness of the coefficients. Here, we indeed avoid these types of assumptions entirely. Note that, in particular, due to the presence of the $\div a E$-term, spatial differentiation is not a viable option. This impedes a fixed-point approach in higher regularity Sobolev spaces. 

Our approach will focus on finding fixed points of the mapping
\[
  G\mapsto E \text{ solving }
  \left\{
  \begin{alignedat}{2}
    -\div (a E+ F(G)*E) &= f &\qquad&\text{in} \ \Omega, \\
    \curl E &= g &&\text{in} \ \Omega, \\
    \nu \times E &= 0 &&\text{on}\  \partial\Omega.
  \end{alignedat}
  \right.
\]
In order to work with Schauder's fixed point theorem, some compactness needs to be established. Again, since the term $\div a E$ is present, $\sH^1$-regularity (and hence Rellich's selection theorem for compactness) cannot be expected to hold either. Thus, other compactness results need to be employed. The root of this option is the Picard--Weber--Weck selection theorem (see \Cref{thm:PWWst} below), which establishes compactness of the embedding for vector fields with $\Lp{2}$-divergence and $\Lp{2}$-curl at the same time. This selection theorem is valid for (weak) Lipschitz domains and hence circumvents the necessity of Gaffney's inequality for higher regularity of $\Omega$. Quite recently in \cite{W22_tnt}, based on both such a compactness result and operator-theoretical analysis of homogenisation problems for div-curl systems of the above type, a compactness theorem covering ``moving coefficients'' coined ``Helga's theorem'' could be proven (\cite[Theorem 9.1]{W22_tnt} and \Cref{thm:Helga} below). It is this compactness result that, together with the basic homogenisation result in \cite[Theorem 7.5]{W22_tnt} (see  \Cref{thm:homnonl} below; see also \cite{NM22} for a similar context), renders the application of Schauder's theorem possible. Of course, one drawback of the method presented here is that the uniqueness of solutions is guaranteed only by assuming extra conditions on the coefficients, which we will not follow up on here.

We mention in passing that the homogenisation method coined in \cite{W18_NHC} and further developed in \cite{W22_tnt} provides the precise framework for homogenisation of nonlocal equations in mathematical physics and has been motivated by nonlocal constitutive relations in \cite[Chapter 10]{K11} in the context of nonlocal response theory. The motivation to homogenise nonlocal equations is also drawn from similar problem set-ups considered in, e.g., the physics papers \cite{G16,C2015,FL17}.

The main mathematical body of the paper starts with some preliminaries on vector-analytic operators and well-posedness for elliptic-type problems in \Cref{sec:background}. In this section, we also provide the solution theory for the linear version of the above nonlinear equation. In \Cref{sec:homnonl}, we recall and further detail the theory of homogenisation for possibly nonlocal coefficients in contrast to just multiplication operators as in the classical literature going back to the work of Tartar and Murat; see, e.g., \cite{Tartar2009}. \Cref{sec:mr} contains the main result and \Cref{sec:pmr} is devoted to its respective proof.

Throughout the manuscript, scalar products are conjugate linear in the first and linear in the second component. Weak convergence is denoted by $\weakto$. Bounded linear operators from one Banach space $X$ into another $Y$ are denoted by $\Lb(X,Y)$.

\section{Background on Operators in Vector-Analysis}\label{sec:background}
The so-called FA-toolbox in \cite{PZ2020} is a general reference for the functional analytic background of the machinery we will unfold for the de Rham complex in the following. For a slow-paced introduction, we additionally refer, e.g., to \cite[Chapter 6]{STW22}.

More specifically, we let $\Omega\subseteq \R^3$ be open, and we define the standard vector-analytic operators from the de Rham complex in $\R^3$: We set the gradient and the curl as
\begin{align*}
   \grad &\colon \dom(\grad)\subseteq \Lp{2}(\Omega) \to \Lp{2}(\Omega)^3; \phi\mapsto \nabla \phi = (\partial_j \phi)_{j\in \{1,2,3\}}\text{, and}\\
   \curl & \colon \dom(\curl)\subseteq \Lp{2}(\Omega)^3 \to \Lp{2}(\Omega)^3; E\mapsto \nabla \times E,
\end{align*}
where the derivatives $\nabla$ and $\nabla\times$ are taken in the distributional sense, and $\dom(\grad)$ and $\dom(\curl)$ are the maximal $\Lp{2}$-domains so that $\nabla \phi\in \Lp{2}(\Omega)^3$ and $\nabla \times E\in \Lp{2}(\Omega)^3$ respectively. Moreover, we define the corresponding operators with homogeneous boundary conditions via
\[
    \gradz \coloneq \overline{\grad\restriction_{\Cc(\Omega)}}\quad\text{ and }\quad\curlz \coloneq \overline{\curl\restriction_{\Cc(\Omega)^3}}.
\]
Finally, we define the divergence
\[
   \div \coloneq -\gradz^*\text{ and }\divz \coloneq -\grad^*.
\]
Note that the circle on top of $\grad$, $\curl$, $\div$ indicates that the corresponding domain incorporates homogeneous boundary conditions in a weak sense. In particular, it serves as a shorthand for the conditions $f=0$, $\nu \times E = 0$, and $\nu \cdot H = 0$ on $\partial\Omega$ for $f \in \dom(\grad)$, $E\in\dom(\curl)$, and $H\in\dom(\div)$, respectively, even when the corresponding traces are not well-defined.

Since test functions are dense in $\Lp{2}$, all the operators just introduced are densely defined, and they are also closed, for the latter follows by mere inspection and definition of the distributional derivatives.
Furthermore, it readily follows that $\curlz^*=\curl$.
As closed linear operators, the respective domains endowed with their respective graph scalar products are Hilbert spaces in their own right, and we shall denote these by
\[
   \sH^1(\Omega), \sH^1_0(\Omega), \sH(\curl,\Omega), \sH(\curlz, \Omega), \sH(\div,\Omega),\ \text{and}\ \sH(\divz,\Omega).
\]

In particular we can now formulate \eqref{eq:main-problem-handwavy} more precisely: For given $f \in \Lp{2}(\Omega)$ and $g \in \ran(\curlz)$ we aim to find $E \in \sH(\curlz,\Omega)$ such that $\varepsilon(E) \in \sH(\div,\Omega)$ and
\begin{equation*}
  \left\{
  \begin{alignedat}{3}
    -\div \varepsilon(E) &= f &\qquad& \text{in}\ \Omega, \\
    \curl E &= g && \text{in}\ \Omega.
  \end{alignedat}
  \right.
\end{equation*}
Note that the homogeneous tangential boundary condition is hidden in $E \in \sH(\curlz,\Omega)$.

A fundamental question concerning these operators is when these spaces are compactly embedded into $\Lp{2}$. This cannot be expected for $ \sH(\curl,\Omega)$ in itself because the kernel, $\ker(\curl)\subseteq \Lp{2}(\Omega)^3$, is an infinite-dimensional closed subspace. Thus, some (derivative) control on this kernel is needed. Before we present the nowadays classical selection theorem, we recall the complex property valid for the ranges and kernels of the operators just mentioned. The respective proofs can be found in many places; \cite[Chapter 6]{STW22} is one of them.

\begin{proposition}\label{prop:compl}
  Let $\Omega\subseteq \R^3$ be open. Then,
  \[
    \ran(\gradz)\subseteq \ker(\curlz),\quad \ran(\curlz)\subseteq \ker(\divz)
  \]
  and
  \[
    \ran(\grad)\subseteq \ker(\curl),\quad \ran(\curl)\subseteq \ker(\div).
  \]
\end{proposition}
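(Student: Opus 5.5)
The plan is to prove the two inclusions with boundary conditions first, by working on test functions and using closedness, and then to get the two inclusions without boundary conditions by duality.

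\textbf{Step 1: $\ran(\gradz)\subseteq\ker(\curlz)$.} For $\phi\in\Cc(\Omega)$ we have $\nabla\phi\in\Cc(\Omega)^3$, and $\curl\nabla\phi=0$ by Schwarz's theorem. Now take an arbitrary $\phi\in\dom(\gradz)$. By definition of the closure there are $\phi_n\in\Cc(\Omega)$ with $\phi_n\to\phi$ and $\nabla\phi_n\to\gradz\phi$ in $\Lp{2}$. The sequence $(\nabla\phi_n)_n$ lies in $\Cc(\Omega)^3\subseteq\dom(\curlz)$, converges to $\gradz\phi$, and satisfies $\curlz\nabla\phi_n=0$. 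Since $\curlz$ is closed, $\gradz\phi\in\dom(\curlz)$ and $\curlz\gradz\phi=0$.

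\textbf{Step 2: $\ran(\curlz)\subseteq\ker(\divz)$.} The argument is identical. For $E\in\Cc(\Omega)^3$ we have $\div\curl E=0$ classically, and $\curl E\in\Cc(\Omega)^3$. Every $\psi\in\Cc(\Omega)^3$ lies in $\dom(\grad^*)=\dom(\divz)$, with $\divz\psi=\div\psi$ classically; this follows by integrating by parts against $u\in\dom(\grad)$, which is legitimate because $\psi$ has compact support. Hence $\curl E\in\ker(\divz)$ for test fields $E$. We then pass to the closure as in Step 1, using that $\divz$ is closed, being an adjoint.

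\textbf{Step 3: the inclusions without boundary conditions, by duality.} For a densely defined closed operator $A$ with $B=A^*$, the orthogonal complement of a range is the kernel of the adjoint, $\ran(A)^\perp=\ker(A^*)$. We apply this together with Steps 1 and 2.
\begin{itemize}
\item \emph{$\ran(\grad)\subseteq\ker(\curl)$.} Let $u\in\dom(\grad)$ and $E\in\dom(\curlz)$. Then
\[
\scprod{\grad u}{\curlz E}=-\scprod{u}{\divz\curlz E}=0
\]
by Step 2, so $\grad u\in\ran(\curlz)^\perp=\ker(\curlz^*)=\ker(\curl)$.
\item \emph{$\ran(\curl)\subseteq\ker(\div)$.} Let $E\in\dom(\curl)$ and $\phi\in\dom(\gradz)$. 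Then
\[
\scprod{\curl E}{\gradz\phi}=\scprod{E}{\curlz\gradz\phi}=0
\]
by Step 1, using $\curl^*=\curlz^{**}=\curlz$. Hence $\curl E\in\ker(\gradz^*)=\ker(\div)$.
\end{itemize}

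The only real subtlety is the identification of adjoints in Step 3: one needs $\curl^*=\curlz$, which uses that $\curlz$ is closed and densely defined, and the fact that a test field lies in $\dom(\divz)$ with the classical action. Everything else is routine.
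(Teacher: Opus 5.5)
Your proof is correct. The paper does not prove this proposition itself; it only refers to \cite[Chapter 6]{STW22}. So the comparison below is with the usual textbook argument rather than with a proof in the paper.

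Steps 1 and 2 are sound. Test fields lie in $\dom(\curlz)$, and also in $\dom(\divz)$ with the classical action, by integration by parts against $u\in\dom(\grad)$. Both $\curlz$ and $\divz=-\grad^*$ are closed, so the passage to the closure works. Step 3 only uses $\ran(A)^\perp=\ker(A^*)$, which holds for any densely defined $A$, closedness is not needed. It also uses $\curlz^*=\curl$, which the paper records right after the definitions.

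A small precision on your closing remark. In the second bullet you do not need $\curl^*=\curlz$, and hence not closedness of $\curlz$. The relation $\scprod{\curl E}{F}=\scprod{E}{\curlz F}$ for $E\in\dom(\curl)$, $F\in\dom(\curlz)$ follows directly from $\curl=\curlz^*$.

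The common alternative obtains the two inclusions without boundary conditions directly, because distributional derivatives commute:
\begin{itemize}
\item For $u\in\dom(\grad)$ one has $\nabla\times\nabla u=0$ in $\mathcal{D}'(\Omega)^3$. By maximality of the domain, $\grad u\in\dom(\curl)$ with $\curl\grad u=0$.
\item Likewise $\nabla\cdot(\nabla\times E)=0$ for $E\in\dom(\curl)$. This uses that $\div=-\gradz^*$ is the maximal distributional divergence.
\end{itemize}
That route is shorter for the maximal operators. Your route is purely operator-theoretic: it uses only the complex property on the core $\Cc(\Omega)$, closedness, and the adjoint relations between the operators. That fits the FA-toolbox viewpoint the paper adopts.
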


With this, one readily confirms that
\[\ker(\curl)^\perp=\overline{\ran(\curl^*)}=\overline{\ran(\curlz)}\subseteq \ker(\divz).\]
Hence, the next result shows that intersecting $\sH(\curl,\Omega)$ with $\sH(\divz,\Omega)$ produces a convenient compactness result for $\curl$ outside its kernel and, at the same time, it provides the desired derivative control on the kernel.
\begin{theorem}[Picard--Weber--Weck selection theorem, \cite{Picard1984}]\label{thm:PWWst}
  Let $\Omega\subseteq \R^3$ be an open, bounded and (weak) Lipschitz (i.e., $\partial\Omega$ is a Lipschitz manifold) domain. Then,
  \[
    \sH(\divz,\Omega)\cap \sH(\curl,\Omega) \cpt \Lp{2}(\Omega)^{3}
    \quad\text{and}\quad
    \sH(\div,\Omega)\cap \sH(\curlz,\Omega) \cpt \Lp{2}(\Omega)^{3},
  \]
  where $\cpt$ denotes a compact embedding.
\end{theorem}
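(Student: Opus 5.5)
The plan is to reduce the statement, via a partition of unity and the local Lipschitz charts, to the case of a bounded domain where compactness follows from Rellich's theorem after a regular decomposition. It suffices to prove the first embedding. The second follows by the symmetric argument with the roles of tangential and normal boundary conditions swapped, that is, interchanging $\curl\leftrightarrow\curlz$ and $\div\leftrightarrow\divz$. Alternatively, one can run the same argument for the boundary condition carried by $\curl$ instead of $\div$.

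Let $(E_n)$ be bounded in $\sH(\divz,\Omega)\cap\sH(\curl,\Omega)$. We need a subsequence converging in $\Lp{2}(\Omega)^3$.

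\textbf{Step 1: localisation.} Cover $\overline\Omega$ by finitely many open sets $U_0\Subset\Omega$ and $U_1,\dots,U_N$. Each $U_j$ with $j\ge1$ comes with a bi-Lipschitz map $\Phi_j\colon U_j\to(-1,1)^3$ sending $U_j\cap\Omega$ to the half cube $(-1,1)^2\times(0,1)$. Take a smooth partition of unity $(\chi_j)$ subordinate to this cover.

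By the product rule, $\chi_jE_n$ is bounded in the same space. Indeed,
\[
\div(\chi_jE_n)=\nabla\chi_j\cdot E_n+\chi_j\div E_n,\qquad
\curl(\chi_jE_n)=\nabla\chi_j\times E_n+\chi_j\curl E_n,
\]
and multiplication by $\chi_j$ preserves $\dom(\divz)$. To verify the last claim, test against $\grad\phi$ and note that $\chi_j\phi\in\dom(\grad)$.

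\textbf{Step 2: the interior piece.} For $j=0$, extend $\chi_0E_n$ by zero to $\R^3$. Since $\div$ and $\curl$ are in $\Lp{2}$, the identity $-\Delta=\curl\curl-\grad\div$ combined with Fourier analysis shows that these fields are bounded in $\sH^1(\R^3)^3$ with fixed compact support. Rellich's theorem then gives a convergent subsequence.

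\textbf{Step 3: boundary pieces.} Transport each $\chi_jE_n$ to the half cube by the Piola transform adapted to the normal boundary condition, namely $E\mapsto \det(D\Phi)\,(D\Phi)^{-1}E\circ\Phi^{-1}$. This transform maps $\sH(\divz)$ to $\sH(\divz)$. However, it does not preserve $\sH(\curl)$. The covariant transform $(D\Phi)^{\top}E\circ\Phi^{-1}$ preserves $\curl$ but not $\div$.

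This mismatch is the main obstacle, because with only Lipschitz charts the two transforms differ by a merely $\Lp{\infty}$ matrix. A naive $\sH^1$ estimate therefore fails: the transported field lies in $\sH(\divz)\cap\varepsilon\sH(\curl)$ with a nonsmooth $\varepsilon$.

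I would overcome this with a Helmholtz/regular decomposition rather than direct regularity. Write
\[
E_n=\grad u_n+\curl w_n
\]
inside $\sH(\divz,\Omega)$, with $u_n$ bounded in $\sH^1(\Omega)$ solving a Neumann problem with data $\div E_n$, and $\curl w_n\in\ker(\divz)$. Rellich's theorem on the Lipschitz domain, available after extension, compactly controls $u_n$ in $\Lp{2}$. Pairing
\[
\norm{\grad(u_n-u_m)}^2=-\scprod{\div(E_n-E_m)}{u_n-u_m}
\]
then yields the compactness of $\grad u_n$.

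For the divergence-free part $\curl w_n$, choose $w_n\in\sH(\curlz)\cap\sH(\div)\cap\ker(\div)$. By localisation and extension across the flat boundary, where reflection works for $\curlz$, $w_n$ is bounded in an $\sH^1$-type space. It is therefore precompact in $\Lp{2}$. The term $\curl w_n$ is then handled by the same pairing trick, now with $\curl E_n$:
\[
\norm{\curl(w_n-w_m)}^2=\scprod{\curl(E_n-E_m)}{w_n-w_m}.
\]
This identity needs the orthogonality of the two components, which Proposition~\ref{prop:compl} supplies.

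The delicate point is the reduction to the half cube. I expect the Lipschitz (rather than smooth) charts to be handled because only $\Lp{2}$-bounds of $\sH^1$-type, and not their regularity constants, need to survive the bi-Lipschitz change of variables. If this fails, I would fall back on Weck's original induction over local charts.
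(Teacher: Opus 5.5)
The paper does not prove this statement; it quotes it from \cite{Picard1984}. Your Steps~1 and~2 and your handling of the gradient part are fine. Step~3, however, has a genuine gap exactly where you locate the main obstacle: the Helmholtz decomposition does not remove the bi-Lipschitz mismatch, it only relocates it.

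You decompose on $\Omega$ with the identity coefficient. The sequence $(w_n)_n$ in $\sH(\curlz,\Omega)\cap\ker(\div)$ whose $\Lp{2}$-precompactness you then need is a bounded sequence in $\sH(\div,\Omega)\cap\sH(\curlz,\Omega)$. In other words, you have reduced the first embedding to the second one. For $w_n$ you invoke ``localisation and extension across the flat boundary''. But flattening a Lipschitz chart with the covariant transform (the one that keeps $\curlz$ and the tangential condition) turns control of $\div(\chi_j w_n)$ into control of $\div(\tilde\varepsilon\,\tilde w_n)$, where $\tilde\varepsilon$ is built from $D\Phi_j$ and is merely in $\Lp{\infty}$. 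So reflection yields no $\sH^1$ bound; this is precisely the obstruction you described a paragraph earlier. On $\Omega$ itself there is no $\sH^1$ bound either: for a Lipschitz polyhedron with a reentrant edge, $\sH(\div,\Omega)\cap\sH(\curlz,\Omega)\not\subseteq\sH^1(\Omega)^3$ (take the gradient of a cut-off of $r^{2/3}\sin(2\theta/3)$).

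The decomposition also takes consequences of the theorem as inputs. Writing the solenoidal part as $\curlz w_n$ with $(w_n)_n$ bounded needs closedness of $\ran(\curlz)$ and the estimate $\norm{w}\leq c\norm{\curlz w}$ on $\dom(\curlz)\cap\ker(\curlz)^\perp$; the paper derives both from this very theorem via \Cref{thm:crchrcomp}. You have also dropped the Neumann fields $\ker(\divz)\cap\ker(\curl)$. These are nontrivial when $\Omega$ has handles, and their finite dimensionality is again a corollary of the compactness.

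What is missing is a coefficient-independence lemma, which is your own pairing trick applied in the right place. On a fixed domain $D$, suppose $\sH(\divz,D)\cap\sH(\curl,D)\cpt\Lp{2}(D)^3$. Then also $\dset{E\in\sH(\divz,D)}{\mu E\in\sH(\curl,D)}\cpt\Lp{2}(D)^3$ for every $\mu\in\Lp{\infty}(D)^{3\times3}$ with $\Re\mu\geq c>0$. To see this, write $\mu E_n=\grad u_n+F_n$ with $F_n\in\ker(\divz)$. Then:
\begin{itemize}
\item $F_n$ is bounded in $\sH(\divz,D)\cap\sH(\curl,D)$, hence precompact by hypothesis;
\item $u_n$ is precompact in $\Lp{2}(D)$ by Rellich;
\item $\Re\scprod{\grad(u_n-u_m)}{\mu^{-1}\grad(u_n-u_m)}=-\Re\scprod{u_n-u_m}{\divz(E_n-E_m)}-\Re\scprod{\grad(u_n-u_m)}{\mu^{-1}(F_n-F_m)}$ tends to $0$ along a subsequence.
\end{itemize}

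With this lemma the mismatch becomes harmless. The Piola transform of $\chi_jE_n$ lies in $\sH(\divz,Q_+)$ on the half cube $Q_+$. Multiplying it by the uniformly positive definite $\Lp{\infty}$ matrix $\mu_j$ that relates the Piola and covariant transforms puts it in $\sH(\curl,Q_+)$. The lemma then replaces $\mu_j$ by the identity. The identity case on the box $Q_+$ follows by reflecting across its faces and running your Fourier argument from Step~2 on the torus. Finally, $\Lp{2}$-convergence transfers back to $\Omega$, since the inverse transform is bounded on $\Lp{2}$. The tangential version is analogous: decompose along $\ran(\gradz)\oplus\ker(\div)$ and use Rellich in $\sH^1_0$.
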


A mere standard consequence of the compact embedding result is the closedness of the ranges of $\curl$ (and hence of $\curl^*=\curlz$) and $\divz$ (and hence of $\divz^*=-\grad$). The abstract result reads as follows.

\begin{theorem}\label{thm:crchrcomp}
  Let $H_1,H_2$ be Hilbert spaces, $C\colon \dom(C)\subseteq H_1\to H_2$ closed and densely defined. 
  Then, the following conditions are equivalent:
  \begin{enumerate}
    \item\label{item:closed-range} $\ran(C)\subseteq H_2$ is closed.
    \item\label{item:bounded-from-below} $\exists c>0\, \forall x\in \dom(C)\cap \ker(C)^{\perp} \cond \norm{x}_{H_1}\leq c\norm{Cx}_{H_2}$.
    \item\label{item:adjoint-closed-range} $\ran(C^*)\subseteq H_1$ is closed.
  \end{enumerate}
  If $\dom(C)\cap \ker(C)^\perp\cpt H_1$, then \ref{item:bounded-from-below} holds. 
\end{theorem}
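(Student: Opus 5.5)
We prove the closed range theorem in the order \ref{item:closed-range}$\Rightarrow$\ref{item:bounded-from-below}$\Rightarrow$\ref{item:closed-range}, then \ref{item:closed-range}$\Leftrightarrow$\ref{item:adjoint-closed-range}, and finally the compactness criterion. Throughout we use that $\ker(C)$ is closed (since $C$ is closed) and that $C^{**}=C$.

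\emph{\ref{item:closed-range}$\Rightarrow$\ref{item:bounded-from-below}.} Consider the restriction $C_0\coloneq C\restriction_{\dom(C)\cap\ker(C)^\perp}$. It is injective, still closed, and has the same range as $C$: for $x\in\dom(C)$ write $x=x_0+x_1$ with $x_0\in\ker(C)\subseteq\dom(C)$, so $x_1=x-x_0\in\dom(C)\cap\ker(C)^\perp$ and $Cx=Cx_1$. If $\ran(C)$ is closed, then $C_0^{-1}\colon\ran(C)\to H_1$ is a closed operator defined on the whole Hilbert space $\ran(C)$, hence bounded by the closed graph theorem. This is exactly the estimate in \ref{item:bounded-from-below}.

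\emph{\ref{item:bounded-from-below}$\Rightarrow$\ref{item:closed-range}.} Let $Cx_n\to y$. By the decomposition above we may assume $x_n\in\ker(C)^\perp$. The estimate makes $(x_n)$ Cauchy, so $x_n\to x$, and closedness of $C$ gives $x\in\dom(C)$ with $Cx=y$.

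\emph{\ref{item:closed-range}$\Leftrightarrow$\ref{item:adjoint-closed-range}.} By symmetry (apply the argument to $C^*$, using $C^{**}=C$), it suffices to show \ref{item:bounded-from-below}$\Rightarrow$\ref{item:adjoint-closed-range}. We show $\ran(C^*)=\ker(C)^\perp$; the inclusion $\ran(C^*)\subseteq\ker(C)^\perp$ is always true. For $z\in\ker(C)^\perp$, define the functional $\ell(Cx)\coloneq\scprod{x}{z}$ for $x\in\dom(C)\cap\ker(C)^\perp$.
\begin{itemize}
  \item It is well defined because $C_0$ is injective.
  \item It is bounded by \ref{item:bounded-from-below}: $\abs{\ell(Cx)}\leq c\norm{z}\norm{Cx}$.
  \item It is defined on the closed subspace $\ran(C)$, so Riesz gives $y\in\ran(C)$ with $\scprod{Cx}{y}=\scprod{x}{z}$ for all $x\in\dom(C)\cap\ker(C)^\perp$.
  \item The identity extends to all $x\in\dom(C)$, since both sides vanish on $\ker(C)$; for the right-hand side this uses $z\perp\ker(C)$.
\end{itemize}
Hence $y\in\dom(C^*)$ and $C^*y=z$, so $\ran(C^*)=\ker(C)^\perp$ is closed.

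\emph{Compactness criterion.} Here $\dom(C)\cap\ker(C)^\perp$ carries the graph norm. Suppose \ref{item:bounded-from-below} fails. Then there are $x_n\in\dom(C)\cap\ker(C)^\perp$ with $\norm{x_n}=1$ and $Cx_n\to0$. These are bounded in the graph norm, so by compactness a subsequence converges in $H_1$ to some $x$ with $\norm{x}=1$. Then $(x_n,Cx_n)\to(x,0)$, and closedness of $C$ gives $x\in\dom(C)$ with $Cx=0$, i.e.\ $x\in\ker(C)$. But $x\in\ker(C)^\perp$ as a limit of elements of this closed subspace, so $x=0$, contradicting $\norm{x}=1$.

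The only mildly delicate step is the adjoint range identification, specifically extending the Riesz identity to all of $\dom(C)$ by handling the kernel component. The rest is routine.
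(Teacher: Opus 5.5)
Your proof is correct and follows the same route as the paper, which just cites the closed graph theorem for (i)$\Leftrightarrow$(ii), Banach's closed range theorem for (i)$\Leftrightarrow$(iii), and a standard contradiction argument for the compactness criterion. You supply these details yourself, including a self-contained Hilbert-space proof of the closed range part via Riesz; there you tacitly, and legitimately, use that $C^*$ is again closed and densely defined when you apply the argument to $C^*$.
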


\begin{proof}
  That \ref{item:closed-range} is equivalent to \ref{item:bounded-from-below} is a standard application of the closed graph theorem, see, e.g., \cite[Theorem IV.1.6]{G06}. The equivalence of \ref{item:closed-range} and \ref{item:adjoint-closed-range} is Banach's closed range theorem (\cite[Theorem IV.1.2]{G06}). The final statement follows with a (standard) contradiction argument, see, e.g., \cite{DIW24} or again the FA-toolbox in \cite{PZ2020}.
\end{proof}
The closed range result in \Cref{thm:crchrcomp} asserts a certain continuous invertibility statement of $C$ apart from $\ker(C)$ (where such a result necessarily fails) and with codomain $\ran(C)$ (where such a result can only hold in the first place).
\begin{corollary}\label{cor:invform}
  Let $H_1,H_2$ be Hilbert spaces, $C\colon \dom(C)\subseteq H_1\to H_2$ closed and densely defined.
  If $\ran(C)\subseteq H_2$ is closed, then with $\iota_0\colon \ran(C)\hookrightarrow H_2$ and $\iota_1\colon \ran(C^\ast) \hookrightarrow H_1$
  \[
    \iota_0^*C\iota_1 \colon \dom(C)\cap \ran(C^\ast)\subseteq \ran(C^\ast)  \to \ran(C); u\mapsto Cu
  \]
  is continuously invertible.
\end{corollary}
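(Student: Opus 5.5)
The plan is to verify directly that $B \coloneq \iota_0^*C\iota_1$ is a well-defined linear bijection from $\dom(C)\cap\ran(C^*)$ onto $\ran(C)$, and then to show that its inverse is bounded using \Cref{thm:crchrcomp}.

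\emph{Setting up the spaces.} Since $C$ is closed and densely defined, $C^*$ exists and is closed, and $\ker(C)^\perp=\overline{\ran(C^*)}$. The hypothesis that $\ran(C)$ is closed gives, by \Cref{thm:crchrcomp}, that $\ran(C^*)$ is closed as well. Hence $\ran(C^*)=\ker(C)^\perp$ is a Hilbert space, and so is $\ran(C)$. This also shows that $\ran(C^*)$ is a legitimate ambient space for the domain $\dom(C)\cap\ran(C^*)$. The map $B$ indeed takes values in $\ran(C)$, and on $\ran(C)$ the adjoint $\iota_0^*$ is simply the identity (it is the orthogonal projection onto $\ran(C)$).

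\emph{Injectivity.} Suppose $u\in\dom(C)\cap\ran(C^*)$ and $Cu=0$. Then $u\in\ker(C)\cap\ker(C)^\perp$, so $u=0$.

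\emph{Surjectivity.} Take $y=Cx$ with $x\in\dom(C)$. Decompose $x=x_0+x_1$ with $x_0\in\ker(C)$ and $x_1\in\ker(C)^\perp=\ran(C^*)$. Since $\ker(C)\subseteq\dom(C)$, we get $x_1=x-x_0\in\dom(C)$ and $Cx_1=Cx=y$.

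\emph{Continuity of the inverse.} By \Cref{thm:crchrcomp}\ref{item:bounded-from-below}, for every $u\in\dom(C)\cap\ker(C)^\perp$ we have $\norm{u}\leq c\norm{Cu}$. Therefore $\norm{B^{-1}y}\leq c\norm{y}$ for all $y\in\ran(C)$, so $B^{-1}$ is a bounded linear operator $\ran(C)\to\ran(C^*)$, defined on all of $\ran(C)$.

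If one also wants $B$ itself to be closed, this follows because $B$ is a restriction of the closed operator $C$ to a closed subspace, with values in a closed subspace. It is in any case implied by the boundedness of $B^{-1}$.

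There is no real obstacle here. The only points needing care are to invoke the closed range theorem in order to identify $\ran(C^*)$ with $\ker(C)^\perp$, and to note that $\ker(C)\subseteq\dom(C)$, which makes the decomposition in the surjectivity step legitimate.
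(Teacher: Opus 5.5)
Your proposal is correct and follows essentially the same route as the paper. Both use \Cref{thm:crchrcomp} to get that $\ran(C^*)$ is closed and hence equals $\ker(C)^\perp$, check bijectivity directly, and bound the inverse via condition \ref{item:bounded-from-below}; your injectivity and surjectivity arguments (the latter using $\ker(C)\subseteq\dom(C)$) just spell out what the paper calls ``evident''.
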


\begin{proof}
\Cref{thm:crchrcomp} yields closedness of $\ran(C^\ast)$ and hence $\ran(C^\ast)=\ker(C)^\perp$.
  Thus, the operator is evidently one-to-one and onto. The continuous invertibility now follows from \ref{item:bounded-from-below} in \Cref{thm:crchrcomp}.
\end{proof}

 In \cite{TW14} this has been analysed in the context of divergence form problems; see also \cite[Section~1.2]{BW26}. In fact, the formulation in \cite[Theorem 2.2]{W22_tnt} led to the present formulation, which we shall make use of in the context of the div-curl system in question. Note that we require a slight refinement of the respective statement as we need some quantitative control in terms of the bounds of the coefficients. For $C\colon \dom(C)\subseteq H_1\to H_2$ closed and densely defined, $H_1,H_2$ Hilbert spaces, the subspace
\[
  V_C \coloneq \dom(C)\cap \ker(C)^\perp
\]
is considered a Hilbert space endowed with the respective graph scalar product.

\begin{theorem}\label{thm:wptw}
  Let $H_1,H_2$ be Hilbert spaces, $C\colon \dom(C)\subseteq H_1\to H_2$ closed and densely defined with $\ran(C)\subseteq H_2$ closed. Let $a\in \Lb(H_2)$ satisfy
  \[\Re a \coloneq (a+a^*)/2 \geq c\]
  for some $c>0$ in the sense of positive definiteness. Then, for every $f\in \ran(C^*)$, there exists a unique $u\in V_C$ such that
  \[
    \forall \phi \in V_C \cond \scprod{a C u}{C \phi}_{H_{2}} = \scprod{f}{\phi}_{H_{1}}.
  \]
  Moreover, with the canonical embeddings $\iota_0\colon \ran(C)\hookrightarrow H_2$ and $\iota_1\colon \ran(C^\ast) \hookrightarrow H_1$ we have
  \[
    u = (\iota_1^* C^*a C\iota_1)^{-1} f = (\iota_0^*C\iota_1)^{-1} (\iota_0^* a\iota_0)^{-1} (\iota_1^* C^*\iota_0)^{-1} f
  \]as well as
  \begin{align*}
    \norm{u}_{H_1}
    &\leq
      \norm{(\iota_0^*C\iota_1)^{-1}}_{\Lb(\ran(C),\ran(C^*))} \norm{(\iota_1^* C^*\iota_0)^{-1}}_{\Lb(\ran(C^*),\ran(C))} \frac{1}{c}  \norm{f}_{H_1}\text{ and} \\
    \norm{Cu}_{H_2}
    & \leq \sqrt{\norm{(\iota_0^*C\iota_1)^{-1}}_{\Lb(\ran(C),\ran(C^*))} \norm{(\iota_1^* C^*\iota_0)^{-1}}_{\Lb(\ran(C^*),\ran(C))}} \frac{1}{c} \norm{f}_{H_1}.
  \end{align*}
\end{theorem}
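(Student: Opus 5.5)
Since $\ran(C)$ is closed, \Cref{thm:crchrcomp} makes $\ran(C^*)$ closed as well. Hence $\ker(C)^\perp=\overline{\ran(C^*)}=\ran(C^*)$ and $V_C=\dom(C)\cap\ran(C^*)$. By \Cref{cor:invform}, $B\coloneq\iota_0^*C\iota_1\colon V_C\subseteq\ran(C^*)\to\ran(C)$ is continuously invertible. Applying the same corollary to $C^*$, whose range is closed and for which $\ker(C^*)^\perp=\ran(C)$, the operator $B_*\coloneq\iota_1^*C^*\iota_0$ is continuously invertible from $\dom(C^*)\cap\ran(C)$ onto $\ran(C^*)$. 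A short check shows $B_*=B^*$: the adjoint of $\iota_0^*C\iota_1$ restricted to $\ran(C)$ is $C^*$ restricted to $\dom(C^*)\cap\ran(C)$, with values projected to $\ran(C^*)$, and these values already lie in $\ran(C^*)$. Finally, $a_0\coloneq\iota_0^*a\iota_0\in\Lb(\ran(C))$ inherits $\Re a_0\geq c$. By Lax--Milgram, or simply because $\Re\scprod{a_0x}{x}\geq c\norm{x}^2$ gives injectivity, closed range, and dense range via the adjoint, $a_0$ is invertible with $\norm{a_0^{-1}}\leq 1/c$.

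\textbf{Existence and uniqueness.} Define $u\coloneq B^{-1}a_0^{-1}(B^*)^{-1}f\in V_C$. For $\phi\in V_C$ we have $C\phi=B\phi\in\ran(C)$ and $Cu=Bu\in\ran(C)$, so
\[
  \scprod{aCu}{C\phi}_{H_2}=\scprod{a_0Bu}{B\phi}=\scprod{(B^*)^{-1}f}{B\phi}=\scprod{f}{\phi}_{H_1}.
\]
The second equality uses $a_0Bu=(B^*)^{-1}f$, and the last uses $B^{**}=B$. For uniqueness, if $u$ solves the problem with $f=0$, test with $\phi=u$. This gives $c\norm{Cu}^2\leq\Re\scprod{aCu}{Cu}=0$, so $u\in\ker(C)\cap\ker(C)^\perp=\{0\}$. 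The identity $u=(\iota_1^*C^*aC\iota_1)^{-1}f$ then follows once we note that $\iota_1^*C^*aC\iota_1=B^*a_0B$ as operators on $\ran(C^*)$, which holds because $aCu$ is only paired against elements of $\ran(C)$ and $\iota_0\iota_0^*$ is the projection onto $\ran(C)$. Making precise the domain of this operator composition is, in my view, the only delicate step.

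\textbf{Norm bounds.} The estimate for $\norm{u}$ is immediate from the factorisation together with $\norm{a_0^{-1}}\leq1/c$. For $Cu$, test with $\phi=u$:
\[
  c\norm{Cu}^2\leq\abs{\scprod{f}{u}}\leq\norm{f}\norm{u}.
\]
Inserting the bound on $\norm{u}$ gives
\[
  \norm{Cu}^2\leq c^{-2}\norm{B^{-1}}\norm{(B^*)^{-1}}\norm{f}^2,
\]
and taking square roots yields the stated inequality. (Alternatively, since $\norm{B^{-1}}=\norm{(B^*)^{-1}}$, one could use $Cu=a_0^{-1}(B^*)^{-1}f$ directly.)
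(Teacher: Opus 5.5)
Your proposal is correct and follows the paper's route: invertibility of $\iota_0^*C\iota_1$ and its adjoint via \Cref{cor:invform}, the $H_1$-bound read off from the factorisation, and the bound on $Cu$ obtained by testing with $\phi=u$ and inserting the $H_1$-bound; the only difference is that you verify existence, uniqueness and the factorised formula directly, whereas the paper delegates these to \cite[Theorem 2.2]{W22_tnt}. The domain question you flag is settled in one line: for $x\in\dom(C)\cap\ran(C^*)$ one has $aCx-\iota_0\iota_0^*aCx\in\ran(C)^\perp=\ker(C^*)\subseteq\dom(C^*)$, so $\iota_1^*C^*aC\iota_1$ and $B^*a_0B$ have the same domain and coincide there.
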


\begin{remark}\label{rem:distC}
  Unique existence for $u\in V_C$ also holds for given $f\in V_C'$ such that
  \[
    \forall \phi \in V_C \cond \scprod{a C u}{C \phi}_{H_2} = f(\phi).
  \]
  The respective result and proofs are contained, e.g., in \cite[Corollary~1.2.4]{BW26}, \cite[Theorem 2.9]{W18_NHC}, or also \cite[Theorem 3.1]{TW14}.
\end{remark}

\begin{proof}[Proof of \Cref{thm:wptw}]
  The results are in principle contained in the sources mentioned in \Cref{rem:distC}. More specifically, the reformulations in \cite[Theorem 2.2]{W22_tnt} provide the concrete solution operator representation at hand, i.e., with $f$ chosen from $\ran(C^*)$ instead of the whole $V_C'$. The continuous invertibility of $\iota_0^*C\iota_1$ (and its adjoint) follows from \Cref{cor:invform}. The continuity estimate for the $H_1$-norm of $u$ is then evident. For the second estimate, we use the equation satisfied by $u \in V_C$ and $\Re a\geq c$ as well as the $H_1$-estimate to obtain
  \begin{align*}
    \MoveEqLeft
    c \norm{Cu}_{H_2}^2 \\
    & \leq \Re \scprod{a Cu}{Cu}_{H_{2}} = \Re \scprod{f}{u}_{H_{1}} 
    \\
    & \leq \norm{f}_{H_1} \frac{1}{c} \norm{(\iota_0^*C\iota_1)^{-1}}_{\Lb(\ran(C),\ran(C^*))} \norm{(\iota_1^* C^*\iota_0)^{-1}}_{\Lb(\ran(C^*),\ran(C))} \norm{f}_{H_1},
  \end{align*}
  which establishes the claim.
\end{proof} 

Facilitating this result, we can provide a convenient solution theory for a linear version of the nonlinear equation in the focus of the present article. For this, let $0<\alpha\leq \beta$ and define
\[
  \mathcal{M}(\alpha,\beta) \coloneq \dset{a\in  \Lb(\Lp{2}(\Omega)^3)}{\Re a\geq \alpha, \Re a^{-1}\geq 1/\beta}.
\]

\begin{theorem}\label{thm:sotheorcurldiv}
  Let $\Omega\subseteq \R^3$ be an open and bounded weak Lipschitz domain, and let $0<\alpha\leq \beta$. Further, assume that $\Omega$ has a connected complement. Then, for all $f \in \Lp{2}(\Omega)$, $g\in \ran(\curlz)$, and $\varepsilon \in \mathcal{M}(\alpha,\beta)$, there is a unique $E\in \dom(\curlz)\cap \dom(\div\varepsilon)$ such that
  \[
    \div\varepsilon E = f \quad\text{and}\quad \curlz E = g.
  \]
  Moreover, there exists some $c\geq 0$ such that, for all $f \in \Lp{2}(\Omega)$, $g\in \ran(\mathring{\curl})$, and $\varepsilon \in \mathcal{M}(\alpha,\beta)$, we have
  \[
    \norm{E}_{\Lp{2}(\Omega)^3} \leq c( \norm{f}_{\Lp{2}(\Omega)^3} +\norm{g}_{\Lp{2}(\Omega)}),
  \]
  i.e., $c$ only depends on $\Omega$, $\alpha$, and $\beta$.
\end{theorem}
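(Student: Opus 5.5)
Split $E = E_0 + E_1$ into a curl part and a gradient part. First handle the curl equation, then reduce the divergence equation to a problem for a potential and solve it with \Cref{thm:wptw}.

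\textbf{Step 1: the curl part.} Take $C=\curlz$, which has closed range by \Cref{thm:PWWst} and \Cref{thm:crchrcomp}. By \Cref{cor:invform} there is a unique $E_1 \in \dom(\curlz)\cap\ran(\curl)$ with $\curlz E_1 = g$. Its norm satisfies $\norm{E_1}\leq c_1\norm{g}$, where $c_1$ depends only on $\Omega$. Note that $\ran(\curl)\subseteq \ker(\div)$ by \Cref{prop:compl}. This piece does not involve $\varepsilon$ at all.

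\textbf{Step 2: the gradient part.} Look for $E_0 = \gradz u$ with $u\in \sH^1_0(\Omega)$; then $\curlz E_0 = 0$ automatically by \Cref{prop:compl}. The divergence equation $\div\varepsilon(\gradz u + E_1) = f$ becomes
\[
  -\gradz^*\varepsilon\gradz u = f + \gradz^*\varepsilon E_1 .
\]
Weakly, this says $\scprod{\varepsilon \gradz u}{\gradz\phi} = -\scprod{f}{\phi} - \scprod{\varepsilon E_1}{\gradz\phi}$ for all test functions $\phi$. The right-hand side is a functional in $V_C'$ for $C=\gradz$. Since $\ker(\gradz)=\{0\}$ on the bounded domain $\Omega$, we have $V_C=\sH^1_0(\Omega)$. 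The range of $\gradz$ is closed by Poincaré's inequality.

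By \Cref{rem:distC}, with $a=\varepsilon$ and $\Re\varepsilon\geq\alpha$, there is a unique solution $u$. The standard Lax--Milgram estimate gives
\[
  \norm{\gradz u} \leq \alpha^{-1}\bigl(c_P\norm{f} + \norm{\varepsilon}\norm{E_1}\bigr).
\]
The operator norm of $\varepsilon$ is controlled by $\beta$: from $\Re \varepsilon^{-1}\geq 1/\beta$ one gets $\norm{\varepsilon x}^2\leq \beta\Re\scprod{\varepsilon x}{x}\leq \beta\norm{\varepsilon x}\norm{x}$, hence $\norm{\varepsilon}\leq\beta$. Setting $E=\gradz u+E_1$ gives existence together with the estimate $\norm{E}\leq c(\norm{f}+\norm{g})$, with $c=c(\Omega,\alpha,\beta)$. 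The sign convention between $\div\varepsilon E=f$ and $-\div$ is immaterial: replace $f$ by $-f$.

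\textbf{Step 3: uniqueness (main obstacle).} Suppose $\curlz E=0$ and $\div\varepsilon E=0$. We need $\ker(\curlz)=\ran(\gradz)$, i.e.\ that the Dirichlet fields $\ker(\curlz)\cap\ker(\div)$ are trivial. This is exactly where the hypothesis that $\Omega$ has connected complement enters. I would quote the standard fact that, for bounded weak Lipschitz domains, the dimension of the space of Dirichlet fields equals the number of bounded connected components of the complement, i.e.\ the second Betti number (Picard). Connectedness of the complement then gives $\ker(\curlz)=\overline{\ran(\gradz)}=\ran(\gradz)$.

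With that, $E=\gradz u$ and $\scprod{\varepsilon\gradz u}{\gradz\phi}=0$ for all $\phi\in\sH^1_0(\Omega)$. Testing with $\phi=u$ and using $\Re\varepsilon\geq\alpha$ gives $\alpha\norm{\gradz u}^2\leq 0$, so $E=0$.

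The same topological fact is what makes the ansatz in Steps 1--2 general: every solution has this decomposition, since $\Lp{2}(\Omega)^3=\ran(\gradz)\oplus\ker(\div)$.
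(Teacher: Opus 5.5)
Your proof is correct, but it uses a different decomposition from the paper's.

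\textbf{The paper's route.} The paper uses the \emph{generalised} Helmholtz decomposition $\Lp{2}(\Omega)^3=\ran(\gradz)\oplus\varepsilon^{-1}\ran(\curl)$ from \cite{W22_tnt} and writes $E=\gradz(\div\varepsilon\gradz)^{-1}f+\varepsilon^{-1}\curl\iota_1(\iota_1^*\curlz\varepsilon^{-1}\curl\iota_1)^{-1}g$. Here $\curlz$ annihilates the first summand and $\div\varepsilon$ annihilates the second, so the two equations decouple completely. Each piece then comes from \Cref{thm:wptw} with $\Lp{2}$-data. For $C=\curl$ the coefficient is $\varepsilon^{-1}$, and this is where $\Re\varepsilon^{-1}\geq 1/\beta$ serves as a coercivity bound.

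\textbf{Your route.} You use the ordinary orthogonal splitting $\ran(\gradz)\oplus\ran(\curl)$. Your curl potential $E_1$ is therefore independent of $\varepsilon$. The coupling is pushed into the $\sH^{-1}$-datum $\div_{-1}\varepsilon E_1$ of a single scalar problem, which you handle by \Cref{rem:distC}. Your Step 3 proves directly the fact $\ker(\curlz)\cap\ker(\div\varepsilon)=\{0\}$ that the paper imports from \cite{W22_tnt}.

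\textbf{Comparison.} Your route has several advantages:
\begin{enumerate}
\item It is more elementary and self-contained.
\item It yields an explicit constant, e.g.\ $c=\max\{c_P/\alpha,(1+\beta/\alpha)c_1\}$.
\item It needs $\Re\varepsilon^{-1}\geq 1/\beta$ only through $\norm{\varepsilon}\leq\beta$.
\item It shows that the connected-complement hypothesis is used only for uniqueness, not for existence.
\end{enumerate}
The paper's representation has its own advantage. It expresses $E$ through exactly the two solution operators, with coefficient $\varepsilon$ on $\ran(\gradz)$ and $\varepsilon^{-1}$ on $\ran(\curl)$, acting on $\varepsilon$-independent data. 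These are precisely the objects governed by the Schur topology $\tau(\mathfrak{g}_0,\mathfrak{c})$, and this is what underlies \Cref{thm:homnonl}. In your formula, by contrast, the datum $\div_{-1}\varepsilon_n E_1$ would move with $n$, which nonlocal $\Htopo$-convergence does not directly control.
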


\begin{proof} 
  The proof essentially follows from \cite[Theorem 4.3]{W22_tnt}. For this, note that $\ran(\curl)$ is closed by \Cref{thm:PWWst} and \Cref{thm:crchrcomp} and $\ran(\gradz)$ is closed by Rellich's selection theorem. Next, since $\Omega$ has a connected complement, $\ker(\curlz)\cap \ker(\div)=\{0\}$ (by, e.g., \cite[Theorem 6.6]{PW22} or \cite[Theorem 1]{Pi1982}) and, together with \Cref{prop:compl}, we obtain the (orthogonal) Helmholtz decomposition
  \begin{align*}
      \Lp{2}(\Omega)^3 &= \ran(\gradz)\oplus \ker(\div)\\
      &= \ran(\gradz) \oplus \bigl(\ker(\div)\cap\ran(\curl) \bigr)\oplus \bigl(\ker(\div)\cap \ker(\curlz)\bigr)\\
      &= \ran(\gradz)\oplus \ran(\curl).
      \end{align*}
   Hence, by  \cite[Remark 3.4 and Corollary 3.3]{W22_tnt}, it follows that $\ker(\mathring{\curl})\cap \ker(\div\varepsilon)=\{0\}$, and we obtain the (orthogonal)
  generalised Helmholtz decomposition
  \[
    \Lp{2}(\Omega)^3 = \ran(\gradz) \oplus \varepsilon^{-1}\ran(\curl)
  \]
  by \cite[Theorem 3.2]{W22_tnt}.
  Thus, based on $\ker(\gradz)=\{0\}$, $\ran(\div)=\Lp{2}(\Omega)$, and \Cref{thm:wptw}, we easily see that
  \[
    E = \gradz (\div\varepsilon \gradz)^{-1} f + \varepsilon^{-1} \curl\iota_1 (\iota_1^*\curlz\varepsilon^{-1} \curl\iota_1)^{-1} g,
  \]
  where $\iota_1\colon \ker(\curl)^\perp\hookrightarrow \Lp{2}(
  \Omega)^3$ is the canonical embedding, is the unique solution with the corresponding estimate (for the estimate, also note that $\norm{\varepsilon^{-1}} \leq 1/\alpha$ as $\Re \varepsilon \geq \alpha$).
\end{proof}

\section{Homogenisation of Nonlocal Coefficients}\label{sec:homnonl}

In this section, we gather the necessary material from the theory of nonlocal homogenisation in the light of operator theory and $\Htopo$-convergence. Throughout this section, we let $\Omega\subseteq \R^3$ be an open, bounded, and weak Lipschitz domain and additionally assume that $\Omega$ has a connected complement. First, we quickly recall the consequences for the vector-analytic operators from the previous section in this setting. We have
\begin{enumerate}[label=\textup{(\alph{*})}]
  \item\label{item:PWWst} $\sH(\divz,\Omega)\cap \sH(\curl,\Omega) \cpt \Lp{2}(\Omega)^{3}$ (\Cref{thm:PWWst}),
  \item\label{item:closed-ranges-of-grad-and-curl} $\mathfrak{c}\coloneq \ran(\curl)$ and $\mathfrak{g}_0\coloneq \ran(\gradz)$ are closed (\Cref{thm:PWWst} together with \Cref{thm:crchrcomp}),
  \item\label{item:cohomology-group-trivial} $\ker(\div)\cap \ker(\curlz)=\{0\}$ (\cite[Theorem 6.6]{PW22} or \cite[Theorem 1]{Pi1982}), and
  \item\label{item:Helmholtz-decomposition} the Helmholtz decomposition $\Lp{2}(\Omega)^3 = \mathfrak{g}_0\oplus \mathfrak{c}$ (see \ref{item:closed-ranges-of-grad-and-curl}, \ref{item:cohomology-group-trivial}, and \Cref{prop:compl}).
\end{enumerate}
Note that, in particular, using \ref{item:closed-ranges-of-grad-and-curl}, we may apply \Cref{thm:wptw} (and \Cref{rem:distC}) to $C=\gradz$ or $C=\curl$. This is the basis of nonlocal $\Htopo$-convergence (or, more generally, the Schur topology as introduced in \cite{W18_NHC}). Focusing on the more particular situation here, we use the following ad-hoc introduction. For $0<\alpha\leq \beta$, we recall from the previous section that
\[
  \mathcal{M}(\alpha,\beta) = \dset[\big]{a\in \Lb(\Lp{2}(\Omega)^3)}{\Re a\geq \alpha, \Re a^{-1}\geq 1/\beta},
\]
as well as for closed $C\colon \dom(C)\subseteq H_1\to H_2$, the Hilbert space
\[
  V_C =\dom(C)\cap \ker(C)^{\perp}\text{ endowed with the graph scalar product of $C$}.
\] 
We say that $(a_n)_n$ in $  \mathcal{M}(\alpha,\beta)$ \textbf{$\Htopo$-nonlocally} converges to some $a \in   \mathcal{M}(\alpha,\beta)$, (i.e., in the \textbf{Schur topology} $\tau(\mathfrak{g}_0,\mathfrak{c})$), if the following condition holds:
\begin{itemize}[beginpenalty=10000,topsep=0.5\baselineskip]
\item
For all $f\in V_{\gradz}' (= \sH^{-1}(\Omega))$, $g\in V_{\curl}'$ and the corresponding sequences of unique solutions $u_n\in V_{\gradz} (=\sH^1_0(\Omega))$ and $v_n \in V_{\curl}$ to
\begin{align*}
 \forall \phi\in V_{\gradz} &\cond \scprod{a_n \gradz u_n}{\gradz \phi} = f(\phi)\text{, and} \\
 \forall \psi\in V_{\curl} &\cond \scprod{a_n^{-1} \curl v_n}{\curl \psi} = g(\psi)
\end{align*}
respectively, we have
\begin{align*}
 u_n \weakto u \in V_{\gradz}, &\quad v_n \weakto  v \in V_{\curl},\\
  a_n\gradz u_n \weakto  a\gradz u , &\quad    a_n^{-1}\curl v_n \weakto  a^{-1}\curl v \in \Lp{2}(\Omega)^3,
\end{align*}
where $u\in V_{\gradz}$ and $v\in V_{\curl}$ are the unique solutions to
\begin{align*}
 \forall \phi\in V_{\gradz} &\cond \scprod{a \gradz u}{\gradz \phi} = f(\phi)\text{, and} \\
 \forall \psi\in V_{\curl} &\cond \scprod{a^{-1} \curl v}{\curl \psi} = g(\psi),
\end{align*}
respectively.
\end{itemize}

The first insight required, when the Schur topology is concerned, is the following compactness result.

\begin{theorem}[{{\cite[Theorem 5.5, Theorem 5.10]{W18_NHC}}}]\label{thm:schurcomp} Let $0<\alpha\leq \beta$. Then,
\[
   (\mathcal{M}(\alpha,\beta), \tau(\mathfrak{g}_0,\mathfrak{c}))
\]
is metrisable and (sequentially) compact.
\end{theorem}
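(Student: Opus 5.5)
The plan is to reduce the statement to two facts about block operators, following the construction in \cite{W18_NHC}. The Helmholtz decomposition \ref{item:Helmholtz-decomposition} gives $\Lp{2}(\Omega)^3=\mathfrak{g}_0\oplus\mathfrak{c}$ with orthogonal projections $P_0$ and $P_1$. Accordingly, every $a\in\mathcal{M}(\alpha,\beta)$ is written as a $2\times2$ block operator matrix $(a_{ij})_{i,j\in\{0,1\}}$ with $a_{ij}=P_i a P_j^*$. Since $\Re a\geq\alpha$, the block $a_{00}$ is invertible with $\norm{a_{00}^{-1}}\leq 1/\alpha$. The Schur complement $a_{11}-a_{10}a_{00}^{-1}a_{01}$ equals $((a^{-1})_{11})^{-1}$, and $\Re a^{-1}\geq 1/\beta$ gives it a uniform bound.

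\textbf{Step 1: characterise the topology.} Insert the representation of \Cref{thm:wptw} with $C=\gradz$ and $C=\curl$ into the definition of $\Htopo$-nonlocal convergence. For $C=\gradz$ and data $f\in V_{\gradz}'$, the flux is $a\gradz u = a\iota_0 a_{00}^{-1}(\ldots)f$. Its $\mathfrak{g}_0$-component is determined by $f$ alone; its $\mathfrak{c}$-component is $a_{10}a_{00}^{-1}$ applied to an element of $\mathfrak{g}_0$ that depends only on $f$. Hence convergence for all $f$ is equivalent to weak operator convergence of $a_{00,n}^{-1}$ and of $a_{10,n}a_{00,n}^{-1}$. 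The analogous computation for $C=\curl$ with $a^{-1}$ is expressed through Schur complements. It shows that the condition is equivalent to weak operator convergence of
\[
a_{00,n}^{-1},\quad a_{10,n}a_{00,n}^{-1},\quad a_{00,n}^{-1}a_{01,n},\quad a_{11,n}-a_{10,n}a_{00,n}^{-1}a_{01,n}.
\]
This uses that the solution operators $(\iota_0^*C\iota_1)^{-1}$ and their adjoints are topological isomorphisms onto $\mathfrak{g}_0$ and $\mathfrak{c}$, so the data range over all of these spaces. I will take this identification as given from \cite{W18_NHC}. Checking it carefully is routine but lengthy.

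\textbf{Step 2: compactness.} All four families are uniformly bounded in operator norm, with bounds depending only on $\alpha$ and $\beta$. Bounded sets of $\Lb(H)$ for separable $H$ are metrisable and sequentially compact in the weak operator topology. The map $a\mapsto(\text{four blocks})$ is injective, because $a$ can be reconstructed from these blocks. Therefore $\tau(\mathfrak{g}_0,\mathfrak{c})$ on $\mathcal{M}(\alpha,\beta)$ is the initial topology of an injective map into a metrisable space, and so it is metrisable. Given a sequence, I extract a subsequence along which all four blocks converge in the weak operator topology. I then need to show that the limits come from some $a\in\mathcal{M}(\alpha,\beta)$.

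\textbf{Step 3: the limit lies in $\mathcal{M}(\alpha,\beta)$.} This is the main obstacle, since products of weakly converging operators are problematic. I will reconstruct $a$ from the limits $b_{00}=\lim a_{00,n}^{-1}$, $b_{10}$, $b_{01}$ and $s$. Invertibility of $b_{00}$ follows from $\Re a_{00,n}^{-1}\geq \alpha/\norm{a}^2$: this lower bound passes to weak limits and yields $\Re b_{00}\geq c>0$. The inequalities $\Re a\geq\alpha$ and $\Re a^{-1}\geq 1/\beta$ must then be checked for the reconstructed operator. To do this I rewrite $\Re\scprod{ax}{x}\geq\alpha\norm{x}^2$ in block-Schur form, as a quadratic-form inequality that is affine in each of the four converging quantities when tested on suitable vectors. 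Concretely, I substitute $x_0=a_{00}^{-1}y-a_{00}^{-1}a_{01}x_1$, which makes every term linear in one of the four operators, so weak limits preserve the inequality. If that substitution does not close up, the fallback is to cite \cite[Theorem 5.10]{W18_NHC} directly for the closedness of $\mathcal{M}(\alpha,\beta)$ in the Schur topology.
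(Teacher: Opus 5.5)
Your proposal is correct in substance. It proves the decisive point, that the Schur-topology limit stays in $\mathcal{M}(\alpha,\beta)$, by a different route than the paper.

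Your Steps~1--2 are essentially what the paper takes from \cite{W18_NHC}: the Schur topology as the initial topology of the four maps $a_{00}^{-1}$, $a_{10}a_{00}^{-1}$, $a_{00}^{-1}a_{01}$, $a_{11}-a_{10}a_{00}^{-1}a_{01}$; uniform bounds via $\norm{a}\leq\beta$; and weak-operator compactness and metrisability of balls over the separable space $\Lp{2}(\Omega)^3$. This only yields compactness of a superset of $\mathcal{M}(\alpha,\beta)$.

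For closedness, the paper passes to $\tilde a=\operatorname{diag}\bigl(1/(2\alpha)+1/(2\beta),a^{-1}\bigr)$ on $\Lp{2}(\Omega)^4$. It uses the skew-selfadjoint $A=\begin{psmallmatrix}0&\div\\ \gradz&0\end{psmallmatrix}$ with compactly embedded $\dom(A)\cap\ker(A)^\perp$, transfers the convergence to $\tau(\ker(A),\ran(A))$, and invokes \cite[Theorem 3.2.3]{BW26}. You instead verify $\Re a\geq\alpha$ and $\Re a^{-1}\geq 1/\beta$ directly through the reparametrisation $x=(a_{00}^{-1}y-a_{00}^{-1}a_{01}x_1,x_1)$.

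What each route buys: yours is elementary and purely Hilbert-space theoretic. It works for any orthogonal decomposition $H_0\oplus H_1$, needs no compact embedding, and mirrors the classical lower-semicontinuity argument for $\Htopo$-limits. The paper's route avoids computation but rests on a chain of external results and on Rellich's theorem.

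Two points in Step~3 need repair.

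\textbf{Only the left-hand side is affine after the substitution.} Set $s_n\coloneq a_{11,n}-a_{10,n}a_{00,n}^{-1}a_{01,n}$ and $x_n\coloneq(a_{00,n}^{-1}y-a_{00,n}^{-1}a_{01,n}x_1,x_1)$. Then
\[
  \scprod{a_nx_n}{x_n}=\scprod{y}{a_{00,n}^{-1}y}-\scprod{y}{a_{00,n}^{-1}a_{01,n}x_1}+\scprod{a_{10,n}a_{00,n}^{-1}y}{x_1}+\scprod{s_nx_1}{x_1}
\]
is affine in the four operators, but $\alpha\norm{x_n}^2$ is quadratic in $a_{00,n}^{-1}$ and $a_{00,n}^{-1}a_{01,n}$. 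So ``weak limits preserve the inequality'' is not automatic. It does close, because the inequality points the right way: $x_n\weakto x$, hence
$\alpha\norm{x}^2\leq\liminf_n\alpha\norm{x_n}^2\leq\lim_n\Re\scprod{a_nx_n}{x_n}=\Re\scprod{ax}{x}$.
Every $x$ arises this way because $b_{00}$ is invertible.

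\textbf{The bound $\Re a^{-1}\geq1/\beta$ needs the same treatment.} Write it as $\Re\scprod{ax}{x}\geq\beta^{-1}\norm{ax}^2$. Note that $a_nx_n=(y,a_{10,n}a_{00,n}^{-1}y+s_nx_1)\weakto ax$ and use lower semicontinuity again. Then use invertibility of $a$, which follows from $\Re a\geq\alpha$, to substitute $z=ax$.

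Do not rely on your fallback citation. The paper states that \cite{W18_NHC} gives only relative compactness, which is exactly why it supplies a separate closedness argument.
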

\begin{proof}
In \cite[Theorem 5.5, Theorem 5.10]{W18_NHC}, one finds metrisability of the topology and relative compactness of the set under consideration, i.e., there exists a subset of $\Lb(\Lp{2}(\Omega)^3)$ that contains $\mathcal{M}(\alpha,\beta)$ and, if endowed with the Schur topology, is metrisable and compact.

Let $(a_n)_n$ from $\mathcal{M}(\alpha,\beta)$ converge to some $a\in \Lb(\Lp{2}(\Omega)^3)$ with respect to $\tau(\mathfrak{g}_0,\mathfrak{c})$. Define
\[\tilde{a}_n\coloneq\begin{pmatrix}
  1/(2\alpha)+1/(2\beta)  &0\\
    0&a_n^{-1}
\end{pmatrix} \in \Lb(\Lp{2}(\Omega)^4)\]
for $n\in\N$ and $\tilde{a}$ similarly. Set $A\coloneq\begin{psmallmatrix}
    0&\div\\
    \gradz&0
\end{psmallmatrix}$. Then, $A$ is skew-selfadjoint and, by Rellich's selection theorem and a duality argument, $\dom(A)\cap\ker(A)^\perp$ as a Hilbert space is compactly embedded into $\Lp{2}(\Omega)^4$. Furthermore, by \cite[Lemma~5.4]{W22_tnt} and \cite[Theorem~4.10]{W18_NHC}, $a_n\to a$ with respect to $\tau(\mathfrak{g}_0,\mathfrak{c})$ implies $\tilde{a}_n\to\tilde{a}$ with respect to $\tau(\ker(A),\ran(A))=\tau(\{0\}\oplus \mathfrak{c},\Lp{2}(\Omega)\oplus \mathfrak{g}_0)$ from \cite[Theorem 3.2.3]{BW26}. This yields the closedness of the set.
\end{proof}

The present definition has a direct impact on the linear variants of the div-curl problems considered here.

\begin{theorem}[{{\cite[Theorem 7.5]{W22_tnt}}}]\label{thm:homnonl}
  Let $0<\alpha\leq \beta$, $(\varepsilon_n)_n$ in $\mathcal{M}(\alpha,\beta)$, and let $\varepsilon \in \mathcal{M}(\alpha,\beta)$ with $\varepsilon_n\to\varepsilon$ $\Htopo$-nonlocally. Then, if for $f\in \Lp{2}(\Omega)$ and $g\in \ran(\curlz)$,  $(E_n)_n$ in $\dom(\curlz)\cap \dom(\div\varepsilon)$ satisfies
  \[
    -\div (\varepsilon_n E_n) = f  \quad\text{and}\quad \curlz E_n = g ,
  \]
  then $E_n \weakto E$ in $\Lp{2}(\Omega)^3$, where $E\in \dom(\curlz)\cap \dom(\div\varepsilon)$ uniquely solves
  \[
    -\div (\varepsilon E ) = f \quad\text{and}\quad \curlz E  = g.
  \]
\end{theorem}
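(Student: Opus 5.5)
The plan is to decompose $E_n$ via the generalised Helmholtz decomposition from the proof of \Cref{thm:sotheorcurldiv} and to read off the limit of each part from the two conditions in the definition of $\Htopo$-nonlocal convergence. By that proof,
\[
  E_n = \gradz u_n + \varepsilon_n^{-1}\curl v_n ,
\]
where $u_n\in \sH^1_0(\Omega)$ and $v_n\in V_{\curl}$ are determined as follows.

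\textbf{Step 1: identify $u_n$ and $v_n$.} The function $u_n$ solves $\scprod{\varepsilon_n\gradz u_n}{\gradz\phi} = \scprod{f}{\phi}$ for all $\phi\in\sH^1_0(\Omega)$. The function $v_n$ is the unique element of $V_{\curl}$ with $\scprod{\varepsilon_n^{-1}\curl v_n}{\curl\psi} = \scprod{g}{\psi}$ for all $\psi\in V_{\curl}$. The right-hand sides are fixed elements of $V_{\gradz}'$ and $V_{\curl}'$, since $g\in\ran(\curlz)\subseteq \ker(\curl)^\perp$.

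\textbf{Step 2: check this decomposition.} I verify that the displayed $E_n$ satisfies the equations. Using $\ran(\curl)\subseteq\ker(\div)$, we get $-\div(\varepsilon_n E_n) = -\div(\varepsilon_n\gradz u_n) = f$. The curl part follows from the equation for $v_n$: it gives $\curlz\varepsilon_n^{-1}\curl v_n = g$ when tested against $V_{\curl}$. Testing against $\ker(\curl)$ is trivial because both sides lie in $\ker(\curl)^\perp$. Finally $\curlz\gradz u_n=0$. By the uniqueness in \Cref{thm:sotheorcurldiv}, this $E_n$ is the given sequence.

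\textbf{Step 3: pass to the limit.} The definition of $\varepsilon_n\to\varepsilon$ $\Htopo$-nonlocally yields $u_n\weakto u$ in $\sH^1_0(\Omega)$, hence $\gradz u_n\weakto\gradz u$ in $\Lp{2}(\Omega)^3$. It also yields $\varepsilon_n^{-1}\curl v_n\weakto \varepsilon^{-1}\curl v$, where $u$ and $v$ are the corresponding solutions for $\varepsilon$. Therefore
\[
  E_n\weakto E\coloneq \gradz u+\varepsilon^{-1}\curl v .
\]
Repeating Step 2 with $\varepsilon$ in place of $\varepsilon_n$ shows that $E$ solves the limit system, and uniqueness holds by \Cref{thm:sotheorcurldiv}. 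Since the full sequence converges by the definition, no subsequence argument is needed.

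The main obstacle is the identification in Steps 1 and 2. It requires care that the curl equation posed only on $V_{\curl}$ encodes $\curlz(\varepsilon_n^{-1}\curl v_n)=g$ as an identity in $\Lp{2}$, with $\varepsilon_n^{-1}\curl v_n\in\dom(\curlz)$. I would handle this with $\curl^*=\curlz$, the splitting $\Lp{2}(\Omega)^3=\ker(\curl)\oplus\overline{\ran(\curlz)}$, and the fact that $g\perp\ker(\curl)$.
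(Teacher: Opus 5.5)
Your proof is correct, and there is no in-text proof to compare it with: the paper does not argue this statement but quotes it from \cite[Theorem 7.5]{W22_tnt}. Your proposal is therefore best read as a self-contained derivation within the paper's own framework. It uses only the ad hoc, solution-based definition of $\Htopo$-nonlocal convergence from \Cref{sec:homnonl}, which the paper equates with convergence in $\tau(\mathfrak{g}_0,\mathfrak{c})$, together with the solution formula from the proof of \Cref{thm:sotheorcurldiv}.

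The argument is short because $E_n=\gradz u_n+\varepsilon_n^{-1}\curl v_n$ is built exactly from the two quantities the definition controls. These are the potential $\gradz u_n$ of the gradient problem and the flux $\varepsilon_n^{-1}\curl v_n$ of the curl problem. The connected-complement hypothesis (no harmonic component) and the uniqueness in \Cref{thm:sotheorcurldiv} then identify both $E_n$ and the limit. The two convergences in the definition that you did not use give the fluxes as a bonus: $\varepsilon_nE_n=\varepsilon_n\gradz u_n+\curl v_n\weakto\varepsilon\gradz u+\curl v=\varepsilon E$. Compared with simply citing, your route makes clear that in the connected-complement setting the theorem is little more than an unpacking of the definition.

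A few small points to tidy up.
\begin{itemize}
\item The hypothesis $E_n\in\dom(\div\varepsilon)$ in the statement should be read as $E_n\in\dom(\div\varepsilon_n)$.
\item \Cref{thm:sotheorcurldiv} is formulated for $\div\varepsilon E=f$, so you apply its uniqueness with $-f$.
\item In Step~2, the cleanest justification uses $\dom(\curl)=V_{\curl}\oplus\ker(\curl)$, which holds because $\ker(\curl)$ is a closed subspace contained in $\dom(\curl)$. For $\psi\in\ker(\curl)$, both sides of $\scprod{\varepsilon_n^{-1}\curl v_n}{\curl\psi}=\scprod{g}{\psi}$ vanish; the right-hand side vanishes because $g\in\ran(\curlz)\subseteq\ker(\curl)^\perp$. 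Hence the identity holds on all of $\dom(\curl)$, which gives $\varepsilon_n^{-1}\curl v_n\in\dom(\curl^*)=\dom(\curlz)$ with $\curlz\varepsilon_n^{-1}\curl v_n=g$.
\end{itemize}
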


One of the more concrete applications of the developed theory are convolution type operators. As those are also relevant for our nonlinear system, we provide this example more explicitly. Moreover, in order to illustrate convergence in the Schur topology, we recall \cite[Example 6.7]{W18_NHC} in \ref{item:nonlocal-Hconvergence-convolution} of \Cref{ex:conv}.

\begin{example}\label{ex:conv}
\phantom{a}
  \begin{enumerate}[label=\textup{(\alph{*})}, beginpenalty=10000]
    \item\label{item:convolution-bounded-by-L1-norm}%
          Let $\phi\in \Lp{1}(\R^3)$. Then, for $d\in\N$ and $f\in  \Lp{2}(\Omega)^d$, define
          \[
          \phi \cvl f (x) \coloneq \int_{\Omega} \phi(x-y) f(y) \dx[y].
          \]
          By Young's inequality, we infer $\phi\cvl\in \Lb(\Lp{2}(\Omega)^d)$ with
          \[
          \norm{\phi\cvl}_{\Lb(\Lp{2}(\Omega)^d)}\leq \norm{\phi}_{\Lp{1}(\R^3)}.
          \]
    \item\label{item:nonlocal-Hconvergence-convolution}%
          For $0<\alpha\leq \beta$, let $(a_n)_n$ be any nonlocally $\Htopo$-convergent sequence in $\cM(\alpha,\beta)$ with limit $a\in \cM(\alpha,\beta)$, let $(k_n)_n$ be a bounded sequence in $\soboW_{\infty}^{1}(\R^3)$, and define
          \[
          k_n \ast \phi (x)\coloneq \int_{\Omega} k_n(x-y)\phi(y) \dx[y]
          \]
          for $\phi\in \Lp{2}(\Omega)^3$.
          If $k_n\to k\in \Lp{\infty}(\R^{3})$ in $\sigma(\Lp{\infty}\Lp{1})$,
          and if there exists $c>0$ with $a_n + k_n\ast\mathopen{}=(a_n + k_n\ast\mathopen{})^\ast\geq c$ for $n\in\N$, then, by \cite[Example 6.7]{W18_NHC}, there exist $0<\alpha^\prime\leq \beta^\prime$ such that $a_n + k_n\ast\mathopen{}\in \cM(\alpha^\prime,\beta^\prime)$ for $n\in\N$ and $(a_n + k_n\ast\mathopen{})_n$ nonlocally $\Htopo$-converges to $a+k \ast\mathopen{}$.
  \end{enumerate}
\end{example}

In the proof of our main result, we will need the following characterisation of nonlocal $\Htopo$-convergence. In fact, it is this very characterisation that was used in \cite[Example 6.7]{W18_NHC} to prove the statement of \Cref{ex:conv} \ref{item:nonlocal-Hconvergence-convolution}.

For Hilbert spaces $H_1,H_2$ and $C\colon \dom(C)\subseteq H_1\to H_2$ closed and densely defined, we introduce the operator
\[
  C^\diamond \colon
  \begin{cases}
\hfill H_2 &\to\quad V_C^\prime\\
\hfill q&\mapsto\quad \bigl[\phi\mapsto \scprod{q}{C\phi}_{H_2}\bigr]
\end{cases}.
\]
Note that $C^\diamond$ is a natural extension of $C^*$ (see, e.g., \cite[Chapter 9]{STW22}). 
To distinguish between the adjoint operators and the ones mapping into the dual spaces, we will append a `$-1$' in the index. For example, we shall write
\[
  \div_{-1}\coloneq - \gradz^\diamond\colon \Lp{2}(\Omega)^3 \to \sH^{-1}(\Omega)
  \quad\text{and}\quad
  \curlz_{-1}\coloneq\curl^\diamond \colon \Lp{2}(\Omega)^3 \to V_{\curl}'
\]
in the following.
\begin{theorem}[{{\cite[Theorem 6.5]{W18_NHC}}}]\label{thm:dccrit} Let $(a_n)_n$ and $a$ in $\cM(\alpha,\beta)$. Then, the following conditions are equivalent:
  \begin{enumerate}
    \item $a_n\to a$ in $\tau(\mathfrak{g}_0,\mathfrak{c})$.

    \item for all $(q_n)_n$ in $\Lp{2}(\Omega)^3$ weakly convergent to $q \in \Lp{2}(\Omega)^3$ and all $\kappa\colon \N\to\N$ strictly monotone, the following implication holds: if
          \begin{equation*}
            \dset{\div_{-1} a_{\kappa(n)}q_{n}}{n\in \N} \subseteq \sH^{-1}(\Omega)
            \quad\text{and}\quad
            \dset{\curlz_{-1} q_{n}}{n\in \N}\subseteq V_{\curl}'
          \end{equation*}
          are relatively compact,
          then $a_{\kappa(n)}q_n \rightharpoonup aq$.
  \end{enumerate}
\end{theorem}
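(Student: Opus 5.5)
The plan is to prove both implications with the help of the div-curl type compensated compactness that the Helmholtz decomposition $\Lp{2}(\Omega)^3=\mathfrak{g}_0\oplus\mathfrak{c}$ and the selection theorems supply. Throughout, write $P_{\mathfrak{g}_0}$ and $P_{\mathfrak{c}}$ for the orthogonal projections onto $\mathfrak{g}_0$ and $\mathfrak{c}$.

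\emph{Step 1: (i) $\Rightarrow$ (ii).} Let $q_n\weakto q$ satisfy the two relative compactness conditions. Write $q_n=\gradz u_n+\curl v_n$ with $u_n\in V_{\gradz}$ and $v_n\in V_{\curl}$, both bounded. The condition on $\curlz_{-1}q_n$ says precisely that the functionals $\psi\mapsto\scprod{q_n}{\curl\psi}=\scprod{\curl v_n}{\curl\psi}$ are relatively compact in $V_{\curl}'$. Since the Riesz map $V_{\curl}'\to V_{\curl}$ is an isomorphism, $P_{\mathfrak{c}}q_n$ converges strongly along the sequence, necessarily to $P_{\mathfrak{c}}q$.

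Next, decompose $a_{\kappa(n)}q_n=a_{\kappa(n)}\gradz u_n+a_{\kappa(n)}\curl v_n$. For the gradient part, let $\tilde u_n\in V_{\gradz}$ solve $\div_{-1}a_{\kappa(n)}\gradz\tilde u_n=\div_{-1}a_{\kappa(n)}q_n$. By hypothesis and the uniform bounds of \Cref{thm:wptw}/\Cref{rem:distC}, the right-hand sides form a relatively compact set. Passing to subsequences, I would use (i) (applied along the subsequence $a_{\kappa(n)}$, which still converges) together with the uniform continuity of the solution maps in the data to identify the weak limit of the fluxes. Combined with $P_{\mathfrak{c}}q_n\to P_{\mathfrak{c}}q$ strongly, this handles the $\curl$ part.

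\emph{Step 2: the alternative for Step 1.} Rather than the direct route above, it may be cleaner to go through the Schur complement / block-operator characterisation behind $\tau(\mathfrak{g}_0,\mathfrak{c})$ in \cite{W18_NHC}. There, (i) is equivalent to weak operator convergence of $(P_{\mathfrak{g}_0}a_nP_{\mathfrak{g}_0})^{-1}$, of $P_{\mathfrak{c}}a_nP_{\mathfrak{g}_0}(P_{\mathfrak{g}_0}a_nP_{\mathfrak{g}_0})^{-1}$, of $(P_{\mathfrak{g}_0}a_nP_{\mathfrak{g}_0})^{-1}P_{\mathfrak{g}_0}a_nP_{\mathfrak{c}}$, and of the Schur complement. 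I would solve for $P_{\mathfrak{g}_0}a q_n$ in terms of $P_{\mathfrak{g}_0}q_n$ and $P_{\mathfrak{c}}q_n$, and then use two facts to pass to the limit termwise: strong convergence of $P_{\mathfrak{c}}q_n$, and strong convergence of $P_{\mathfrak{g}_0}a_{\kappa(n)}q_n$ in the $\sH^{-1}$-sense, which is the first hypothesis. This block computation is where the main work lies.

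\emph{Step 3: (ii) $\Rightarrow$ (i).} By \Cref{thm:schurcomp}, it suffices to show that every $\tau$-convergent subsequence $a_{\kappa(n)}\to b$ has $b=a$. Take $f\in\sH^{-1}$ and let $u_n$ solve the gradient problem for $a_{\kappa(n)}$; set $q_n=\gradz u_n$. Then $\div_{-1}a_{\kappa(n)}q_n=-f$ is constant and $\curlz_{-1}q_n=0$. Along a further subsequence $q_n\weakto q$, so (ii) gives $a_{\kappa(n)}q_n\weakto aq$. By Step 1 applied to $b$ we also get $a_{\kappa(n)}q_n\weakto bq$. Proceeding analogously with the $\curl$ problem, using $q_n=a_{\kappa(n)}^{-1}\curl v_n$, the solution operators of $a$ and $b$ coincide on all data. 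Hence $a$ and $b$ define the same point of the metrisable space $\tau(\mathfrak{g}_0,\mathfrak{c})$, which is Hausdorff, and so $b=a$.

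I expect the main obstacle to be the identification of limits in Step 1, namely showing that the cross terms $P_{\mathfrak{c}}a_nP_{\mathfrak{g}_0}$ behave correctly under only weak convergence of $P_{\mathfrak{g}_0}q_n$. This is exactly the compensated-compactness point of the argument.
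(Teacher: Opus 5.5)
The paper gives no proof of its own here; the statement is imported from \cite[Theorem 6.5]{W18_NHC}, so your argument has to stand on its own.

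\textbf{Direction (ii)$\Rightarrow$(i).} This part holds. Sequential compactness from \Cref{thm:schurcomp}, the two test families $q_n=\gradz u_n$ and $q_n=a_{\kappa(n)}^{-1}\curl v_n$, and the subsequence principle are exactly what is needed. You do not even need ``Step~1 applied to $b$'': for these two families, $a_{\kappa(n)}q_n\weakto b\,q$ is already part of the definition of $a_{\kappa(n)}\to b$. The resulting identities $a=b$ on $\mathfrak{g}_0$ and $a^{-1}=b^{-1}$ on $\mathfrak{c}$ then give $a=b$, e.g.\ via $\Lp{2}(\Omega)^3=\mathfrak{g}_0\oplus a^{-1}\mathfrak{c}$.

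\textbf{The gap: direction (i)$\Rightarrow$(ii).} Your proposal does not establish this direction, which is the one the paper actually uses (in the proof of \Cref{con:wnl}).
\begin{enumerate}
  \item In Step~1 you split $a_{\kappa(n)}q_n$ along the orthogonal decomposition $q_n=P_{\mathfrak{g}_0}q_n+P_{\mathfrak{c}}q_n$. You then claim that strong convergence of $P_{\mathfrak{c}}q_n$ handles the curl part. It cannot. Since $a_{\kappa(n)}P_{\mathfrak{c}}q_n-a_{\kappa(n)}P_{\mathfrak{c}}q\to0$ in norm, you would need $a_{\kappa(n)}P_{\mathfrak{c}}q\weakto aP_{\mathfrak{c}}q$. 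That is a weak-operator statement, and $\tau(\mathfrak{g}_0,\mathfrak{c})$-convergence does not provide it (cf.\ \Cref{rem:SOTimplWOTandSchur}; already for periodic multiplication operators the weak-operator limit is the arithmetic mean, not the homogenised matrix).
  \item Consequently, $a_{\kappa(n)}P_{\mathfrak{g}_0}q_n$ and $a_{\kappa(n)}P_{\mathfrak{c}}q_n$ need not converge to the correct limits individually; only their sum does.
  \item Your $\tilde u_n$ is not the $u_n$ of that splitting, so the two halves of Step~1 do not fit together.
  \item Step~2 names the right tool, but you propose to express $P_{\mathfrak{g}_0}a_{\kappa(n)}q_n$ through $P_{\mathfrak{g}_0}q_n$ and $P_{\mathfrak{c}}q_n$. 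That is the wrong direction: it leaves exactly the bare cross term acting on a merely weakly convergent sequence, which you flag but do not resolve.
\end{enumerate}

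\textbf{The missing idea.} Eliminate the only merely weakly convergent quantity, $x_n\coloneq P_{\mathfrak{g}_0}q_n$.
\begin{enumerate}
  \item Since (i) passes to subsequences, take $\kappa=\mathrm{id}$.
  \item Exactly as you argued for $y_n\coloneq P_{\mathfrak{c}}q_n$, Poincar\'e's inequality shows that relative compactness of $\div_{-1}a_nq_n$ in $\sH^{-1}(\Omega)$ is equivalent to relative compactness of $z_n\coloneq P_{\mathfrak{g}_0}a_nq_n$ in $\Lp{2}(\Omega)^3$. Pass to a subsequence with $z_n\to z$.
  \item Write $a_n=\begin{psmallmatrix}A_n&B_n\\ C_n&D_n\end{psmallmatrix}$ with respect to $\mathfrak{g}_0\oplus\mathfrak{c}$, set $S_n\coloneq D_n-C_nA_n^{-1}B_n$, and let $A,B,C,D,S$ be the corresponding objects for $a$. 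Then
\[
  x_n=A_n^{-1}z_n-A_n^{-1}B_ny_n,\qquad P_{\mathfrak{c}}a_nq_n=C_nA_n^{-1}z_n+S_ny_n .
\]
  \item The operators $A_n^{-1}$, $C_nA_n^{-1}$, $A_n^{-1}B_n$, $S_n$ are precisely those whose weak-operator convergence is equivalent to (i). The gradient problem yields the first two, and the curl problem for $a_n^{-1}$ yields the last two. They are bounded in terms of $\alpha,\beta$ and act on norm-convergent sequences.
  \item Hence the first identity gives $z=Ax+By=P_{\mathfrak{g}_0}aq$. The second gives $P_{\mathfrak{c}}a_nq_n\weakto CA^{-1}z+Sy=Cx+Dy=P_{\mathfrak{c}}aq$.
  \item The subsequence principle concludes. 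No further compensated-compactness argument is needed.
\end{enumerate}

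Equivalently, in the language of the paper's definition, replace the orthogonal splitting by $q_n=\gradz\tilde u_n+a_n^{-1}\curl w_n$. Here $\tilde u_n$ is yours, and $w_n\in V_{\curl}$ solves the $a_n^{-1}$-curl problem with datum $\curlz_{-1}q_n$. This is possible because the remainder $q_n-\gradz\tilde u_n$ has $a_n$-flux orthogonal to $\mathfrak{g}_0$. Both data sequences are relatively compact. So (i) for fixed data, together with the uniform-in-$n$ Lipschitz dependence of solutions and fluxes on the data, handles both pieces and their fluxes.
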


\section{Main Result}\label{sec:mr}

Throughout this section, we let $\Omega\subseteq \R^3$ be an open, bounded, and weak Lipschitz domain and additionally assume that $\Omega$ has a connected complement. We make use of the vector-analytic operators introduced in \Cref{sec:background} and the definition of the convolution $\cvl$ provided in \Cref{ex:conv}.

\begin{theorem}\label{thm:mt}
Let $0<\alpha\leq \beta$, $a\in \mathcal{M}(\alpha,\beta)$, $F\colon \C^3 \to \C$ continuous and bounded, $f\in \Lp{2}(\Omega)^3$ and $g\in \ran(\curlz)$.

If $\norm{F}_{\infty}\lambda(\Omega)<\alpha$,\footnote{Here $\lambda$ denotes the Lebesgue measure on $\R^{3}$.} then there exists $E\in \Lp{2}(\Omega)^3$ with $\varepsilon(E)\in \sH(\div,\Omega)$ and $E\in \sH(\curlz,\Omega)$ such that
\[
  -\div \varepsilon(E) = f
  \quad\text{and}\quad
  \curlz E = g,
\]
where  $\varepsilon(E) \coloneq a E + F(E)*E$.
\end{theorem}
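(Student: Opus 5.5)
We will apply Schauder's fixed point theorem to the map $\Phi\colon G\mapsto E$ that sends $G\in\Lp{2}(\Omega)^3$ to the unique solution $E\in\dom(\curlz)\cap\dom(\div\varepsilon_G)$ of
\[
  -\div(\varepsilon_G E)=f,\qquad \curlz E=g,\qquad \varepsilon_G\coloneq a+F(G)\cvl .
\]
Here $F(G)\cvl$ denotes the operator $\phi\mapsto\int_\Omega F(G(x-y))\phi(y)\dx[y]$ in the sense of \Cref{ex:conv}. We interpret $F(G)$ as a function on $\R^3$, e.g.\ extended by $0$ outside $\Omega$ or via the kernel $x-y\in\Omega-\Omega$. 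We fix this so that the $\Lp{1}$-norm of the kernel over the relevant difference set is controlled by $\norm{F}_\infty\lambda(\Omega)$. In the worst case we use the Schur-test bound $\sup_x\int_\Omega|k(x-y)|\dx[y]\leq\norm{F}_\infty\lambda(\Omega)$, which is exactly the quantity in the hypothesis.

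\textbf{Step 1: uniform coercivity.} For every $G$ we have $\norm{F(G)\cvl}\leq\norm{F}_\infty\lambda(\Omega)\eqcolon\delta<\alpha$. Hence
\[
  \Re\varepsilon_G\geq\alpha-\delta\eqcolon\alpha'>0,\qquad \norm{\varepsilon_G}\leq\norm{a}+\delta .
\]
For $\Re\varepsilon_G^{-1}$ we use the standard fact that an operator $b$ with $\Re b\geq\alpha'$ and $\norm b\leq M$ satisfies $\Re b^{-1}\geq\alpha'/M^2$. Thus all $\varepsilon_G$ lie in a single class $\cM(\alpha',\beta')$. \Cref{thm:sotheorcurldiv} then gives that $\Phi$ is well defined, with the bound $\norm{\Phi(G)}\leq c(\norm f+\norm g)\eqcolon R$, where $c$ is independent of $G$. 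We set $K\coloneq\dset{G}{\norm G\leq R}$, a closed, bounded, convex set with $\Phi(K)\subseteq K$.

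\textbf{Step 2: compactness of $\Phi(K)$, the main obstacle.} Since $\varepsilon_G$ varies with $G$, the solutions $E$ lie in $\dom(\curlz)\cap\dom(\div\varepsilon_G)$ rather than in one fixed space. The Picard--Weber--Weck theorem therefore does not apply directly. This is exactly where Helga's theorem (\cite[Theorem 9.1]{W22_tnt}) for moving coefficients is needed. Its input, as I expect it, is a sequence $\varepsilon_n\in\cM(\alpha',\beta')$ that converges $\Htopo$-nonlocally, together with bounded solutions of $-\div\varepsilon_nE_n=f$, $\curlz E_n=g$; its output is strong $\Lp{2}$-convergence of $(E_n)_n$ along subsequences.

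Given $G_n\in K$, we use \Cref{thm:schurcomp} to pass to a subsequence with $\varepsilon_{G_n}\to\tilde\varepsilon$ $\Htopo$-nonlocally. More concretely, the kernels $F(G_n)$ are bounded in $\Lp{\infty}$, so a further subsequence converges weak-$*$. The operators $F(G_n)\cvl$ are even compact, being Hilbert--Schmidt with kernels bounded in $\Lp{2}(\Omega\times\Omega)$. We then invoke Helga's theorem to obtain $E_n\to E$ strongly. If Helga's theorem needs more structure than $\Htopo$-convergence alone, I would try to exploit directly that the perturbation $F(G)\cvl$ is compact. Write $aE_n=\varepsilon_{G_n}E_n-F(G_n)\cvl E_n$. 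The last term is precompact, via weak-$*$ convergence of the kernels combined with weak convergence of $E_n$. So $\div_{-1}aE_n$ is precompact in $\sH^{-1}$ and $\curlz_{-1}E_n=g$ is fixed. With $a$ fixed, a div-curl/compactness argument in the spirit of \Cref{thm:dccrit}, namely $\scprod{aE_n}{E_n}\to\scprod{aE}{E}$, yields strong convergence, using $\Re a\geq\alpha$.

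\textbf{Step 3: continuity.} Let $G_n\to G$ in $\Lp{2}$. Passing to a subsequence, $G_n\to G$ almost everywhere, so $F(G_n)\to F(G)$ almost everywhere by continuity of $F$, with uniform bound $\norm F_\infty$. By dominated convergence, $F(G_n)\cvl\to F(G)\cvl$ in operator norm, since the Hilbert--Schmidt kernels converge in $\Lp{2}(\Omega\times\Omega)$. Hence $\varepsilon_{G_n}\to\varepsilon_G$ in norm, and in particular $\Htopo$-nonlocally; alternatively this follows from \Cref{ex:conv}\ref{item:nonlocal-Hconvergence-convolution} by the same estimates. \Cref{thm:homnonl} gives $\Phi(G_n)\weakto\Phi(G)$, and Step 2 upgrades this to strong convergence along subsequences. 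Since the limit is unique, the whole sequence converges.

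Schauder's theorem, applied on $K$ with $\Phi(K)$ relatively compact and $\Phi$ continuous, yields a fixed point $E=\Phi(E)$. This $E$ satisfies $\varepsilon(E)=\varepsilon_EE\in\sH(\div,\Omega)$, $-\div\varepsilon(E)=f$ and $\curlz E=g$, as claimed.
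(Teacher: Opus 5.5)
Your Step~1, the Schauder set-up on the ball $K$, and the operator-norm convergence $F(G_n)\cvl\to F(G)\cvl$ in Step~3 are fine. Step~2, however, which you rightly single out as the main obstacle, has a genuine gap. Helga's theorem (\Cref{thm:Helga}) does not take $\Htopo$-nonlocal convergence alone as input, and it could not: for periodic coefficients $\varepsilon_n=A(n\,\cdot)$ the solutions in general converge only weakly. The theorem requires two further things: $(\varepsilon_n)_n$ must converge to the \emph{same} limit in the weak operator topology and in $\tau(\mathfrak{g}_0,\mathfrak{c})$, and $\dset{\div_{-1}(\varepsilon_nG)}{n\in\N}$ must be relatively compact in $\sH^{-1}(\Omega)$ for every fixed $G$. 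You extract a Schur limit $\tilde\varepsilon$ from \Cref{thm:schurcomp} and, separately, a weak-$*$ limit $h$ of the kernels. But you never show $\tilde\varepsilon=a+h\cvl$ (\Cref{rem:SOTimplWOTandSchur} stresses that the two limits may differ), nor the $\sH^{-1}$-compactness. In the paper this is exactly what \Cref{prop:convo} and \Cref{con:wnl} supply. Weak operator convergence plus that compactness forces, via \Cref{thm:schurcomp}, \Cref{thm:dccrit} and the subsubsequence principle, Schur convergence to the weak operator limit; only after that do \Cref{thm:homnonl} and \Cref{thm:Helga} apply.

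Your fallback does not close the gap. You claim $F(G_n)\cvl E_n$ is precompact because the kernels converge weak-$*$ and $E_n\weakto E$; this is false as a general principle. On a ball $\Omega$ centred at $0$, take $h_n(z)=e^{\mathrm{i}nz_1}\chi(z)$ and $E_n(y)=e^{\mathrm{i}ny_1}\psi(y)e$ with nonnegative $\chi,\psi\in\Cc(\Omega)$ supported near $0$ and a fixed $e\in\C^3$. Then $h_n\cvl E_n=e^{\mathrm{i}nx_1}(\chi\cvl\psi)e$, which has no strongly convergent subsequence. Compactness of each single $F(G_n)\cvl$ gives no collective compactness, and nothing in your argument excludes such resonance between the oscillations of $F(G_n)$ and of $E_n$.

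What weak-$*$ convergence of the kernels does give on bounded $\Omega$ is $F(G_n)\cvl\phi\to h\cvl\phi$ in $\Lp{2}$ for each \emph{fixed} $\phi$. This follows from pointwise convergence of $\int h_n(z)\phi(x-z)\mathbf{1}_\Omega(x-z)\dx[z]$ plus dominated convergence. Apply this to the fixed solution $E$ of $-\div((a+h\cvl)E)=f$, $\curlz E=g$ instead of to $E_n$; this problem is solvable since $\Re(a+h\cvl)\geq\alpha'$ as a weak operator limit. This repairs Step~2 elementarily. By the Helmholtz decomposition, $E_n-E\in\ker(\curlz)=\ran(\gradz)$, so $E_n-E=\gradz w_n$. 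Testing the difference of the two equations with $w_n$ gives $\alpha'\norm{E_n-E}\leq\norm{(F(G_n)\cvl-h\cvl)E}\to0$. The same estimate also proves Step~3 directly, without recourse to Step~2.
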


The existence result is based on Schauder's theorem (see \Cref{thm:Sch} below), which in turn requires a compactness result. For this compactness result, we need the interconnected effects of the weak operator topology (denoted by $\tau_{\textnormal{w}}$) and the Schur topology (i.e., nonlocal $\Htopo$-convergence, denoted by $\tau(\mathfrak{g}_0,\mathfrak{c})$).

\begin{theorem}[{{\cite[Theorem 9.1--Helga's theorem]{W22_tnt}}}]\label{thm:Helga}
Let $0<\alpha\leq \beta$ and $\varepsilon_n\in \cM(\alpha,\beta)$ for $n\in\N$.
Assume that there exists $\varepsilon\in \cM(\alpha,\beta)$ such that
\[
  \varepsilon_n \stackrel{\tau_{\textnormal{w}}}{\to}\varepsilon
  \quad\text{and}\quad
  \varepsilon_n \stackrel{\tau(\mathfrak{g}_0,\mathfrak{c})}{\to}\varepsilon.
\]
In addition, assume for all $G\in \Lp{2}(\Omega)^3$
\begin{equation*}
  \dset{\div_{-1} (\varepsilon_n G)}{n\in \N} \subseteq \soboH^{-1}(\Omega)
\end{equation*}
is relatively compact.

If $(E_n)_n$ is a sequence in $\sH(\curlz,\Omega)$ such that $(\varepsilon_n E_n)_n$ is in $\sH(\div,\Omega)$ and
\begin{enumerate}
  \item $(E_n)_n$ is bounded in $\Lp{2}(\Omega)^{3}$,

  \item $(\curlz (E_{n}))_{n}$ is bounded in $\Lp{2}(\Omega)^{3}$, and

  \item $(\div(\varepsilon_{n}E_{n}))_{n}$ is bounded in $\Lp{2}(\Omega)$,
\end{enumerate}
then $(E_{n})_{n}$ has an $\Lp{2}(\Omega)^{3}$-convergent subsequence.
\end{theorem}

\begin{remark}\label{rem:SOTimplWOTandSchur}
Note that in the latter compactness result, the limits in the weak operator topology on the one hand and in the Schur topology on the other hand need to coincide. Although both topological spaces are (relatively) compact (and, in the present setting, even metrisable) on bounded sets, the respective limits need not be the same. Hence, this coincidence of the limits is a necessary additional assumption. It is not difficult to see that convergence in the strong operator topology implies this coincidence of limits. However, it is often useful to have a stronger statement at hand (i.e., one with weaker assumptions). Finally, let us recall that the coincidence of homogenisation and weak operator topology limit is a useful assumption for the derivation of Hashin--Shtrikman bounds; see, e.g., \cite{Tartar2009}.
\end{remark}

\section{Proof of the Main Result}\label{sec:pmr}

The proof is based on Schauder's fixed point theorem and several results, making it applicable. Before we turn to the proof of the existence result of present interest, \Cref{thm:mt}, we shall state and prove the respective auxiliary results first. 

\begin{theorem}[Schauder's fixed point theorem, \cite{Sch30}]\label{thm:Sch}
  Let $X$ be a Banach space, $\emptyset \neq M\subseteq X$ convex, closed,
  $T\colon M\to M$ continuous with $T[M] \subseteq K$ for a compact $K\subseteq M$.
  Then, $T$ has a fixed point.
\end{theorem}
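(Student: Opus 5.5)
We prove Schauder's theorem by reducing it to Brouwer's fixed point theorem in finite dimensions, using Schauder projections onto finite $\epsilon$-nets of the compact set $K$. Only the compactness of $K$, the convexity of $M$, and continuity of $T$ enter. Closedness of $M$ is used once, to place the limit point inside $M$; in fact $K\subseteq M$ already guarantees this.

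\emph{Step 1: approximation.} Fix $\epsilon>0$. Since $K$ is compact, choose finitely many points $x_1,\dots,x_N\in K$ with $K\subseteq\bigcup_i \ball(x_i,\epsilon)$. For $x\in K$ set $\mu_i(x)\coloneq\max\{0,\epsilon-\norm{x-x_i}\}$. The denominator $\sum_i\mu_i(x)$ is positive on $K$. Define the Schauder projection
\[
  P_\epsilon(x)\coloneq \frac{\sum_{i=1}^N \mu_i(x)\,x_i}{\sum_{i=1}^N\mu_i(x)}, \qquad x\in K.
\]
This map is continuous and takes values in $K_\epsilon\coloneq\conv\{x_1,\dots,x_N\}$, which lies in $M$ because $M$ is convex and $x_i\in K\subseteq M$. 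Since only indices with $\norm{x-x_i}<\epsilon$ carry positive weight, we get $\norm{P_\epsilon(x)-x}<\epsilon$ for all $x\in K$.

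\emph{Step 2: finite-dimensional fixed point.} Consider $T_\epsilon\coloneq P_\epsilon\circ T\restriction_{K_\epsilon}\colon K_\epsilon\to K_\epsilon$. This is well defined since $K_\epsilon\subseteq M$ and $T[M]\subseteq K$, and it is continuous. The set $K_\epsilon$ is a compact convex subset of the finite-dimensional space $\operatorname{span}\{x_1,\dots,x_N\}$, hence homeomorphic to a closed ball of some dimension $d\leq N$; concretely, take a relative interior point and rescale radially in the affine hull. Brouwer's theorem therefore yields $y_\epsilon\in K_\epsilon$ with $T_\epsilon y_\epsilon=y_\epsilon$. By Step 1 this gives
\[
  \norm{Ty_\epsilon-y_\epsilon}=\norm{Ty_\epsilon-P_\epsilon(Ty_\epsilon)}<\epsilon .
\]

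\emph{Step 3: passage to the limit.} Take $\epsilon_k\to0$. Since $Ty_{\epsilon_k}\in K$ and $K$ is compact, a subsequence satisfies $Ty_{\epsilon_k}\to z\in K\subseteq M$. By the estimate from Step 2, also $y_{\epsilon_k}\to z$. Continuity of $T$ on $M$ then gives $Tz=\lim_k Ty_{\epsilon_k}=z$, so $z$ is a fixed point.

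The only step that is not elementary is the appeal to Brouwer on $K_\epsilon$, which need not be a ball. I would handle this by the standard homeomorphism of a compact convex body, taken in its affine hull, onto a closed ball. Alternatively, one can compose with a continuous retraction of a large ball in $\operatorname{span}\{x_i\}$ onto $K_\epsilon$, for instance the nearest-point projection for a Euclidean norm on that span.
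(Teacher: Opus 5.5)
Your proof is correct and complete. It is the classical reduction of Schauder's theorem to Brouwer's theorem via Schauder projections onto finite $\epsilon$-nets of $K$, whereas the paper gives no proof of its own and simply quotes the result from \cite{Sch30}. Your side remark is also accurate: since the limit $z$ already lies in $K\subseteq M$, neither closedness of $M$ nor completeness of $X$ is used, and the nonemptiness of $K$ needed for the net (which you leave implicit) follows from $\emptyset\neq T[M]\subseteq K$.
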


\begin{lemma}\label{lem:fsop} Let $F\colon \C^3 \to \C$ be continuous and bounded, $\Omega\subseteq \R^3$ open and bounded. Further, let $(G_n)_n$ in $\Lp{2}(\Omega)^3$ converge to some $G\in \Lp{2}(\Omega)^3$. Then,
\[
   F(G_n)\cvl\to F(G)\cvl\text{ in the strong operator topology of }\Lb(\Lp{2}(\Omega)^3).
\]
\end{lemma}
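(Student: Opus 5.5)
Fix $E\in\Lp{2}(\Omega)^3$. We need $F(G_n)\cvl E\to F(G)\cvl E$ in $\Lp{2}(\Omega)^3$. First we make sense of the operator: $F(G_n)$ is a bounded measurable function on $\Omega$ with $\abs{F(G_n)}\leq \norm{F}_\infty$. Extending it by zero outside $\Omega$ gives a function in $\Lp{1}(\R^3)$ with $\Lp{1}$-norm at most $\norm{F}_\infty\lambda(\Omega)$. By \Cref{ex:conv}\ref{item:convolution-bounded-by-L1-norm}, the operators $F(G_n)\cvl$ are therefore uniformly bounded in $\Lb(\Lp{2}(\Omega)^3)$.

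The kernel $F(G_n)(x-y)$ involves $G_n$ evaluated at $x-y$, which need not lie in $\Omega$. This causes no difficulty, since we simply work with the zero extension. By linearity,
\[
  F(G_n)\cvl E - F(G)\cvl E = \bigl(F(G_n)-F(G)\bigr)\cvl E,
\]
so Young's inequality gives
\[
  \norm{(F(G_n)-F(G))\cvl E}_{\Lp{2}} \leq \norm{F(G_n)-F(G)}_{\Lp{1}(\Omega)} \norm{E}_{\Lp{2}}.
\]
It therefore suffices to show $F(G_n)\to F(G)$ in $\Lp{1}(\Omega)$. This actually yields convergence in operator norm, which is stronger than the claim.

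To get this, we use a subsequence argument. Take any subsequence of $(G_n)_n$. Since $G_n\to G$ in $\Lp{2}$, it has a further subsequence converging almost everywhere on $\Omega$. Because $F$ is continuous, $F(G_{n_k}(x))\to F(G(x))$ for almost every $x$. Moreover $\abs{F(G_{n_k})-F(G)}\leq 2\norm{F}_\infty$, which is integrable since $\lambda(\Omega)<\infty$. Dominated convergence then gives $\Lp{1}$-convergence along this sub-subsequence. Since every subsequence has a further subsequence with the same limit, the whole sequence converges in $\Lp{1}(\Omega)$.

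The only delicate step is this passage from $\Lp{2}$-convergence of $G_n$ to convergence of the nonlinear compositions. It is handled by the a.e.-subsequence and dominated convergence argument, and it is exactly where boundedness of $\Omega$ and of $F$ enter.
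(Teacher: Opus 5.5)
Your proof is correct and uses the same mechanism as the paper's: an almost-everywhere convergent sub-subsequence, dominated convergence (relying on boundedness of $F$ and of $\Omega$), and the subsubsequence principle. The difference is where you apply dominated convergence. You apply it to the kernels, obtaining $F(G_n)\to F(G)$ in $\Lp{1}(\Omega)$, and Young's inequality then gives convergence even in operator norm. The paper applies it to $F(G_{n_k})\cvl\phi$ for each fixed $\phi$ and obtains only the strong operator convergence it needs. Your explicit zero extension of $F(G_n)$ also makes precise a convention the paper uses implicitly.
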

\begin{proof}
 Let $(G_{n_k})_k$ be any subsequence of $(G_n)_n$. Then, we can choose a further subsequence which converges almost everywhere; thus $F(G_{n_k})\to F(G)$ almost everywhere. Hence, by Lebesgue's dominated convergence theorem, it follows that $F(G_{n_k})\cvl\phi \to F(G)\cvl\phi$ for all $\phi\in \Lp{2}(\Omega)^3$ as $k\to\infty$. The subsubsequence principle (see, e.g., \cite[Proposition 2.1.2]{BW26}) establishes the assertion.
\end{proof}

Throughout the remaining section, we let $\Omega\subseteq \R^3$ be an open, bounded, and weak Lipschitz domain and additionally assume that $\Omega$ has a connected complement.

\begin{theorem}\label{con:wnl}
  Let $0<\alpha\leq \beta$ and $\varepsilon_n\in \cM(\alpha,\beta)$ for $n\in\N$.
 Assume that there exists $\varepsilon\in \Lb(\Lp{2}(\Omega)^3)$ such that
  \begin{equation*}
    \varepsilon_n \stackrel{\tau_{\textnormal{w}}}{\to}\varepsilon
  \end{equation*}
  and that for all $G\in \Lp{2}(\Omega)^3$
  \begin{equation*}
    \dset{\div_{-1} (\varepsilon_n G)}{n\in \N} \subseteq \soboH^{-1}(\Omega)
  \end{equation*}
  is relatively compact.

  Then, $\varepsilon\in \cM(\alpha,\beta)$ and
  \begin{equation*}
    \varepsilon_n \stackrel{\tau(\mathfrak{g}_0,\mathfrak{c})}{\to}\varepsilon.
  \end{equation*}
\end{theorem}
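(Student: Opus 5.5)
The plan is to combine the sequential compactness of the Schur topology (\Cref{thm:schurcomp}) with the characterisation in \Cref{thm:dccrit}, using constant sequences $q_n = q$ as test fields. The aim is to identify every Schur cluster point of $(\varepsilon_n)_n$ with the weak operator limit $\varepsilon$. The subsequence principle then gives convergence of the whole sequence.

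\textbf{Step 1: a Schur limit along a subsequence.} Let $(\varepsilon_{n_k})_k$ be an arbitrary subsequence. By \Cref{thm:schurcomp}, $(\mathcal{M}(\alpha,\beta),\tau(\mathfrak{g}_0,\mathfrak{c}))$ is metrisable and sequentially compact. Hence there are a further subsequence, which I write as $(\varepsilon_{\kappa(n)})_n$ with $\kappa$ strictly monotone, and some $b\in\mathcal{M}(\alpha,\beta)$ such that $\varepsilon_{\kappa(n)}\to b$ in $\tau(\mathfrak{g}_0,\mathfrak{c})$.

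\textbf{Step 2: the Schur limit coincides with $\varepsilon$.} Both $(\varepsilon_{\kappa(n)})_n$ and $b$ lie in $\mathcal{M}(\alpha,\beta)$, so the implication (i)$\Rightarrow$(ii) of \Cref{thm:dccrit} is available for this sequence. Re-indexing if necessary, I apply it with the identity as the index map. Fix $q\in\Lp{2}(\Omega)^3$ and take the constant sequence $q_n\coloneq q$, which converges weakly to $q$.
\begin{itemize}
\item The set $\dset{\curlz_{-1}q}{n\in\N}$ is a single point and hence compact.
\item The set $\dset{\div_{-1}(\varepsilon_{\kappa(n)}q)}{n\in\N}$ is contained in $\dset{\div_{-1}(\varepsilon_n q)}{n\in\N}$. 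The latter is relatively compact in $\sH^{-1}(\Omega)$ by hypothesis (with $G=q$).
\end{itemize}
Therefore \Cref{thm:dccrit} yields $\varepsilon_{\kappa(n)}q\weakto bq$. On the other hand, $\varepsilon_n\to\varepsilon$ in the weak operator topology gives $\varepsilon_{\kappa(n)}q\weakto\varepsilon q$. Uniqueness of weak limits gives $bq=\varepsilon q$ for every $q$, so $\varepsilon=b\in\mathcal{M}(\alpha,\beta)$. This proves the first claim.

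\textbf{Step 3: convergence of the full sequence.} Every subsequence of $(\varepsilon_n)_n$ has a further subsequence converging in $\tau(\mathfrak{g}_0,\mathfrak{c})$, and by Step 2 the limit is always the same $\varepsilon$. Since this topology is metrisable on $\mathcal{M}(\alpha,\beta)$, the subsubsequence principle gives $\varepsilon_n\to\varepsilon$ in $\tau(\mathfrak{g}_0,\mathfrak{c})$.

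\textbf{Expected difficulty.} I expect no serious obstacle. The only delicate point is checking that \Cref{thm:dccrit} is applied within its hypotheses: the limit $b$ must already belong to $\mathcal{M}(\alpha,\beta)$, which is exactly what the compactness in \Cref{thm:schurcomp} provides. We must not presuppose that $\varepsilon\in\mathcal{M}(\alpha,\beta)$; this is obtained only after the identification $\varepsilon=b$.
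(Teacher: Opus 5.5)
Your proposal is correct and follows the paper's proof essentially verbatim. You extract a Schur-convergent subsequence with limit $b\in\cM(\alpha,\beta)$ via \Cref{thm:schurcomp}, test \Cref{thm:dccrit} with constant sequences $q_n=q$ to identify $b$ with the weak operator limit $\varepsilon$ (which also gives $\varepsilon\in\cM(\alpha,\beta)$), and conclude by the subsubsequence principle. Where you differ is only in bookkeeping: you start from an arbitrary subsequence and state explicitly that \Cref{thm:dccrit} is applied to the subsequence itself with the identity as index map, which the paper leaves implicit.
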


\begin{proof}
  By \Cref{thm:schurcomp}, we find $\kappa\colon \N\to\N$ strictly monotone such that $(\varepsilon_{\kappa(n)})_n$  $\Htopo$-nonlocally converges to some $b\in \cM(\alpha,\beta)$. Let $q\in \Lp{2}(\Omega)^3$. By assumption 
  \[
    \dset{\div_{-1} \varepsilon_{\kappa(n)} q}{n\in \N} \subseteq \sH^{-1}(\Omega)
  \]
  is relatively compact. Since, trivially,
  \[
    \{\curlz_{-1}  q \} \subseteq V_{\curl}'
  \]
  is relatively compact too, we infer
  \[
    \varepsilon_{\kappa(n)}q \weakto bq
  \]
  by virtue of \Cref{thm:dccrit}. In particular, $\varepsilon_{\kappa(n)}\to b$ in the weak operator topology. Thus, $b=\varepsilon$ and $\kappa$ can be chosen to be the identity by applying the subsubsequence principle.
\end{proof}

\begin{proposition}\label{prop:convo}
  Let $\mathcal{B}\subseteq \Lp{2}(\Omega)$ be bounded. Then, for all $G\in \Lp{2}(\Omega)^{3}$, the set
  \begin{equation*}
    \dset{\div_{-1} (E*G)}{E\in \mathcal{B}} \subseteq \soboH^{-1}(\Omega)
  \end{equation*}
  is relatively compact.
\end{proposition}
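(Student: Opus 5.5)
The idea is to show that, for fixed $G$, the map $E\mapsto E\cvl G$ is a \emph{compact} linear operator from $\Lp{2}(\Omega)$ into $\Lp{2}(\Omega)^3$. Since $\div_{-1}=-\gradz^\diamond\colon \Lp{2}(\Omega)^3\to\sH^{-1}(\Omega)$ is bounded, the composition $E\mapsto \div_{-1}(E\cvl G)$ is then compact. Hence the image of the bounded set $\mathcal{B}$ is relatively compact in $\sH^{-1}(\Omega)$. No derivative of $E$ or $G$ is needed: all the compactness comes from the integral-operator structure.

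Concretely, I interpret $E\in\Lp{2}(\Omega)$ as extended by zero to $\R^3$, consistent with \Cref{ex:conv}. Fix $G\in\Lp{2}(\Omega)^3$ and also extend it by zero. For $x\in\Omega$, the substitution $z=x-y$ gives
\[
  (E\cvl G)(x)=\int_\Omega E(x-y)G(y)\dx[y]=\int_{D} E(z)\,G(x-z)\dx[z],
\]
where $D\coloneq\Omega-\Omega$ is bounded, and only $z\in\Omega\cap D$ contributes. So $T_G\colon \Lp{2}(\Omega)\to\Lp{2}(\Omega)^3$, $E\mapsto (E\cvl G)|_\Omega$, is an integral operator with kernel $K(x,z)=G(x-z)$ for $(x,z)\in\Omega\times\Omega$.

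The key step is to check that this kernel is square integrable. By Fubini,
\[
  \int_\Omega\int_\Omega \abs{G(x-z)}^2\dx[x]\dx[z]\leq \int_\Omega \norm{G}_{\Lp{2}(\R^3)^3}^2\dx[z] = \lambda(\Omega)\norm{G}_{\Lp{2}(\Omega)^3}^2<\infty.
\]
Hence $T_G$ is Hilbert--Schmidt (applied componentwise for the three components), and in particular compact. Consequently $T_G[\mathcal{B}]$ is relatively compact in $\Lp{2}(\Omega)^3$. Continuity of $\div_{-1}$ (one has $\norm{\div_{-1}q}_{\sH^{-1}}\leq\norm{q}_{\Lp{2}}$ directly from the definition of $\gradz^\diamond$) then transports relative compactness to $\dset{\div_{-1}(E\cvl G)}{E\in\mathcal{B}}$.

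The only delicate point is bookkeeping: making sure the zero extensions and the domain of integration are consistent with the convolution as defined in \Cref{ex:conv}, so that the kernel really is $G(x-z)$ restricted to $\Omega\times\Omega$. Here boundedness of $\Omega$ is exactly what makes the Hilbert--Schmidt norm finite. If one preferred to avoid Hilbert--Schmidt theory, an alternative route would be to approximate $G$ in $\Lp{2}$ by $\Cc(\R^3)^3$ functions $G_k$. Then $\nabla(E\cvl G_k)=E\cvl\nabla G_k$ gives uniform $\sH^1$ bounds over $\mathcal{B}$, Rellich yields compactness, and a diagonal/total-boundedness argument using $\norm{E\cvl(G-G_k)}\leq C\norm{E}\,\norm{G-G_k}$ concludes. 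I would use the Hilbert--Schmidt argument as the main proof.
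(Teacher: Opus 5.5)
Your proof is correct, but it takes a different route from the paper's.

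\textbf{The paper's route.} The paper stays at the level of $\sH^{-1}(\Omega)$. For regular $G$ it moves the divergence onto $G$, so that $\div(E\cvl G)=E\cvl\div G$ is bounded in $\Lp{2}(\Omega)$ by Young's inequality. It then uses the compactness of $\Lp{2}(\Omega)\hookrightarrow\sH^{-1}(\Omega)$, and reaches general $G$ by density and a total-boundedness argument.

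\textbf{Your route.} You show that $E\mapsto E\cvl G$ is itself a compact operator from $\Lp{2}(\Omega)$ to $\Lp{2}(\Omega)^3$: its kernel $G(x-z)$ on $\Omega\times\Omega$ has Hilbert--Schmidt norm at most $\lambda(\Omega)^{1/2}\norm{G}$. Hence $\div_{-1}$ enters only through its boundedness. This is shorter and needs neither Rellich's theorem nor an approximation step. It also avoids commuting $\div$ with the convolution, which is delicate at the boundary. The identity $\div(E\cvl G)=E\cvl\div G$ requires the zero extension of $G$ to have $\Lp{2}$-divergence, e.g.\ $G\in\sH(\divz,\Omega)$. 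For the same reason, your fallback sketch should approximate by $\Cc(\Omega)^3$ rather than $\Cc(\R^3)^3$.

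\textbf{What each buys.} Your argument proves more: relative compactness already in $\Lp{2}(\Omega)^3$. In the proof of \Cref{thm:mt}, the weak convergence $F(G_{n_k})\weakto H$ then even yields $F(G_{n_k})\cvl\to H\cvl$ in the strong operator topology, not just the weak one. The paper's route, in turn, only uses $\norm{E}_{\Lp{1}(\Omega)}$ in its estimates, so it extends to families $\mathcal{B}$ bounded merely in $\Lp{1}(\Omega)$. It is also phrased directly in terms of the $\sH^{-1}$-compactness hypothesis of \Cref{con:wnl}: it needs compactness only after applying the divergence, not compactness of the convolution operator itself.
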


\begin{proof} Without loss of generality we can assume $\mathcal{B} = \ball(0,1)$, i.e., the ball centered in $0$ with radius $1$ in $\Lp{2}(\Omega)$.
  \begin{steps}
    \item\label{item:compactness-proof-step-1} We additionally assume that $G\in \soboH(\div,\Omega)$. Then
    \[
      \mathcal{T}_G \coloneq \dset{ \div (E*G)}{E\in \ball(0,1)} = \dset{E*\div G}{E\in \ball(0,1)} \subseteq \Lp{2}(\Omega).
    \]
    Since $\ball(0,1)$ is bounded in $ \Lp{2}(\Omega)$ and, thus, in $ \Lp{1}(\Omega)$ Young's inequality implies that $\mathcal{T}_G$ is bounded in $\Lp{2}(\Omega)$.
    By the compactness of $ \Lp{2}(\Omega) \hookrightarrow \soboH^{-1}(\Omega)$, i.e., Rellich's selection theorem, $\mathcal{T}_G\subseteq \sH^{-1}(\Omega)$ is compact.

    \item
    Consider some general $G\in \Lp{2}(\Omega)^3$.
    By the continuity of $\div_{-1} \colon \Lp{2}(\Omega)^3 \to \soboH^{-1}(\Omega)$ and the density of $\soboH(\div,\Omega)$ in $\Lp{2}(\Omega)$, for every $\epsilon > 0$,
    we find an $H \in \soboH(\div,\Omega)$ such that
    \begin{equation}\label{eq:approxL2withHdiv}
        \norm{\div_{-1} G  - \div H}_{\soboH^{-1}(\Omega)}\leq \varepsilon/(2\sqrt{\lambda(\Omega)}).
    \end{equation}
     Thus, for all $E\in B$ and $\phi\in \sH^1_0(\Omega)$, we have
    \begin{align*}
    \MoveEqLeft
      \scprod{\div (E \cvl G) -\div (E\cvl H)}{\phi}_{\soboH^{-1}(\Omega)}\\
      &= -\scprod{E\cvl G-E\cvl H}{\gradz \phi}_{\Lp{2}(\Omega)^3} = -\scprod{G-H}{(E\cvl )^*\gradz\phi}_{\Lp{2}(\Omega)^3} \\
      &=-\scprod{G-H}{\grad((E\cvl )^* \phi)}_{\Lp{2}(\Omega)^3} = \scprod{\div_{-1}( G-H)}{(E\cvl )^* \phi}_{\soboH^{-1}(\Omega)}.
    \end{align*}
    Hence, using~\eqref{eq:approxL2withHdiv} and \Cref{ex:conv}~\ref{item:convolution-bounded-by-L1-norm} gives
    \begin{multline*}
      \sup_{\phi\in H_0^1(\Omega), \norm{\phi} \leq 1} \abs{\scprod{\div E\cvl G -\div E\cvl H}{\phi}_{\soboH^{-1}(\Omega)}} \\
      \leq \sup_{\phi\in H_0^1(\Omega), \norm{\phi} \leq 1}\frac{\varepsilon}{2\sqrt{\lambda(\Omega)}} \norm{E}_{\Lp{1}(\Omega)} \norm{\phi}_{\soboH_0^1(\Omega)} \leq \varepsilon/2.
    \end{multline*}
    Since $\mathcal{T}_H$ is relatively compact by \ref{item:compactness-proof-step-1}, we find a finite $\mathcal{F}\subseteq \Lp{2}(\Omega)$ such that
    \[
      \mathcal{T}_H \subseteq \bigcup_{E\in \mathcal{F}} \ball(E,\varepsilon/2).
    \]
    Hence,
    \[
      \mathcal{T}_G \subseteq \bigcup_{E\in \mathcal{F}} \ball(E,\varepsilon)
    \]
    shows that $\mathcal{T}_{G}$ is totally bounded and therefore relatively compact.\qedhere
  \end{steps}
\end{proof}

\begin{proof}[Proof of \Cref{thm:mt}]
  First of all, note that for $G\in \Lp{2}(\Omega)^3$, we have by H{\"o}lder's inequality
  \[
    \norm{F(G)}_{\Lp{1}(\Omega)}\leq \lambda(\Omega)\norm{F}_{\infty}.
  \]
  Hence, $\norm{F(G)\cvl}_{\Lb(\Lp{2}(\Omega)^3)}\leq \lambda(\Omega)\norm{F}_{\infty}$ by \Cref{ex:conv}. Therefore,
  \[
    \Re (a+F(G)\cvl) \geq \alpha - \lambda(\Omega)\norm{F}_{\infty}>0
  \]
and subsequently
\[
\Re (a+F(G)\cvl)^{-1}\geq \frac{\alpha - \lambda(\Omega)\norm{F}_{\infty}}{\norm{a+F(G)\cvl}^2_{\Lb(\Lp{2}(\Omega)^3)}}\geq \frac{\alpha - \lambda(\Omega)\norm{F}_{\infty}}{(\beta +\lambda(\Omega)\norm{F}_{\infty})^2}>0.
\]
  
  Thus, \Cref{thm:sotheorcurldiv} yields that
  $\Phi\colon \Lp{2}(\Omega)^3 \to \Lp{2}(\Omega)^3$ defined via $\Phi(G)\coloneqq E$, where $E\in \dom(\curlz)\cap \dom(\div (a + F(G)\cvl))$ uniquely solves
  \[
    -\div ( a E + F(G)\cvl E)= f\quad\text{and}\quad \curlz E = g
  \]
  is well-defined. More particularly, by virtue of \Cref{thm:sotheorcurldiv}, there exists $\kappa>0$ such that $\tilde{M}\coloneq \Phi[\Lp{2}(\Omega)^3]\subseteq \ball(0,\kappa)\subseteq \Lp{2}(\Omega)^3$.
  
  We claim that $\tilde{M}$ is relatively compact. For this, let $(E_n)_n=(\Phi(G_n))_n$ in $\tilde{M}$. By the boundedness of $F$ and $\Omega$, we find a weakly convergent subsequence $(F(G_{n_k}))_k$ in $\Lp{2}(\Omega)$ converging to some $H\in\Lp{2}(\Omega)$. It is not difficult to see that $F(G_{n_k})\cvl \to H\cvl $ in the weak operator topology. By \Cref{prop:convo}, for all $\phi\in \Lp{2}(\Omega)^3$,
  \[
    \dset{\div_{-1} (F(G_{n_k})*\phi)}{k\in \N}\subseteq \soboH^{-1}(\Omega)
  \]
  is relatively compact and so, by \Cref{con:wnl}, $a + F(G_{n_k})\cvl \to a+ H\cvl $ in the nonlocal $\Htopo$-topology.
  Therefore, \Cref{thm:homnonl} implies $E_{n_k}\weakto E$, where $E\in \dom(\curlz)\cap \dom(\div (a + H\cvl))$ uniquely solves
  \[
    -\div ( a E + H\cvl E)= f\quad\text{and}\quad \curlz E = g.
  \]
   Now, \Cref{thm:Helga} leads to $E_{n_k}\to E$ in $\Lp{2}(\Omega)^3$. Hence, $\tilde{M}$ is relatively compact.

  By \cite[Theorem~11.4]{V20}, $M\coloneq \mathop{\overline{\conv}} \tilde{M}$ (the closure of the convex hull of $\tilde{M}$) is totally bounded (precompact) and complete, hence compact.
  Thus, consider $T\coloneq \Phi|_M$, which evidently maps $M$ into $M$.
  Finally, let $(G_n)_n$ be a convergent sequence in $M$. By \Cref{lem:fsop}, $F(G_n)\cvl\to F(G)\cvl$ in the strong operator topology, and, similarly to the previous arguments, we obtain $\Phi(G_n)\to \Phi(G)$ in $M$. Hence, $T$ is continuous and, by \Cref{thm:Sch}, $T$ has a fixed point.
\end{proof}

\end{document}